\theoremstyle{plain}
\newtheorem{lemma}{Lemma}
\newtheorem {The}{Theorem}
\newtheorem{oldtheorem}{Theorem}
\newtheorem {Prob}{Problem}
\theoremstyle{remark}
\theoremstyle{definition}
\def\map{\longrightarrow}
\def\GL{\operatorname{GL}}
\def\K{\operatorname{K}}
\def\unlhd{\trianglelefteq}
\def\map{\longrightarrow}
\def\pamod#1{\,(\operatorname{mod}{\, #1})\,}
\def\map{\longrightarrow}
\def\epsilon{\varepsilon}
\def\map{\longrightarrow}
\def\K{\operatorname{K}}
\def\GL{\operatorname{GL}}
\long\def\forget#1\forgotten{}
\title[curiouser and curiouser]{Commutators of
elementary subgroups:\\ curiouser and curiouser}
\author{N.~Vavilov}
\address{Department of Mathematics and Computer Science,
St.~Petersburg State University, St.~Petersburg, Russia}
\email{nikolai-vavilov@yandex.ru}
\thanks{The work of the first author was supported by the
Russian Science Foundation grant 17-11-01261.}
\author{Z.~Zhang}
\address{Department of  Mathematics, Beijing Institute
of Technology, Beijing, China}
\email{zuhong@hotmail.com}
\keywords{Bak's unitary groups, elementary subgroups, congruence
subgroups, standard commutator formula, unrelativised commutator formula, elementary generators}
\begin{document}

\begin{abstract}
Let $R$ be any associative ring with $1$, $n\ge 3$, and let $A,B$ 
be two-sided ideals of $R$. In our previous joint works with
Roozbeh Hazrat \cite{Hazrat_Zhang_multiple,
Hazrat_Vavilov_Zhang} we have found a generating set for the
mixed commutator subgroup $[E(n,R,A),E(n,R,B)]$. Later in
\cite{NV18, NZ2} we noticed that our previous results can be 
drastically improved and that $[E(n,R,A),E(n,R,B)]$ is generated by  
1) the elementary conjugates $z_{ij}(ab,c)=t_{ij}(c)t_{ji}(ab)t_{ij}(-c)$ and $z_{ij}(ba,c)$, 2) the elementary commutators 
$[t_{ij}(a),t_{ji}(b)]$, where $1\le i\neq j\le n$, $a\in A$, $b\in B$, 
$c\in R$. Later in \cite{NZ1, NZ3} we noticed that for the second type 
of generators, it even suffices to fix one pair of indices $(i,j)$. Here
we improve the above result in yet another completely unexpected direction and prove that $[E(n,R,A),E(n,R,B)]$ is generated by the elementary commutators $[t_{ij}(a),t_{hk}(b)]$ alone, where 
$1\le i\neq j\le n$, $1\le h\neq k\le n$, $a\in A$, $b\in B$. This 
allows us to revise the technology of relative localisation, and, 
in particular, to give very short proofs for a number of recent results, 
such as the generation of partially relativised elementary groups 
$E(n,A)^{E(n,B)}$, 
multiple commutator formulas, commutator width, and the like.
\end{abstract}

\maketitle

In the present note we generalize the results by Roozbeh Hazrat 
and the authors \cite{Hazrat_Zhang_multiple, Hazrat_Vavilov_Zhang, NV18} on generation of mixed commutator subgroups of relative 
and unrelative elementary subgroups in the general linear group
in a completely unexpected direction.

Let $R$ be an associative ring with 1, and let $\GL(n, R)$ be the 
general linear group of degree $n\ge 3$ over $R$. As
usual, $e$ denotes the identity matrix, whereas $e_{ij}$ denotes a standard matrix unit. For $c\in R$ and $1\le i\neq j\le n$,
we denote by $t_{ij}(c) = e +ce_{ij}$ the corresponding {\it 
elementary transvection\/}. To an ideal $A\unlhd R$, we assign
the elementary subgroup
$$ E(n,A)=\big\langle t_{ij}(a),\ a\in A,\ 1\le i\neq j\le n\big\rangle. $$
\noindent
The corresponding relative elementary subgroup $E(n,R,A)$ is defined as the normal closure of $E(n,A)$ in the absolute elementary subgroup $E(n,R)$. From the work of Michael Stein, Jacques Tits, and Leonid
Vaserstein it is classically known that {\it as a group\/} $E(n,R,A)$
is generated by the {\it elementary conjugates\/}
$$ z_{ij}(a,c)=t_{ji}(c)t_{ij}(a)t_{ji}(-c), $$
\noindent
where $1\le i\neq j\le n$, $a\in A$, $c\in R$.


For $\GL(n,R)$ the study of the mixed commutator subgroups 
such as
$$ [\GL(n,R,A),\GL(n,R,B)],\quad [\GL(n,R,A),E(n,R,B)],\quad [E(n,R,A),E(n,R,B)] $$
\noindent
and other related birelative groups goes back to the 
ground-breaking work of Hyman Bass \cite{Bass_stable}, and
was then continued, again at the stable level by Alec Mason 
and Wilson Stothers \cite{Mason_Stothers, M2}, etc. For 
rings subject to commutativity conditions, this was then
resumed and expanded in several directions first by Hong You \cite{HongYou}, 
and then in our joint papers with 
Roozbeh Hazrat and Alexei Stepanov, see, for instance,
\cite{Vavilov_Stepanov_standard, Hazrat_Zhang, Vavilov_Stepanov_revisited, yoga-1, RNZ1, RNZ2, Hazrat_Zhang_multiple, yoga-2, Hazrat_Vavilov_Zhang_generation, RNZ5, Hazrat_Vavilov_Zhang}.
Those papers relied on the very powerful methods proposed
by Andrei Suslin, Zenon Borewicz and ourselves, Leonid Vaserstein, 
Tony Bak, and others to prove standard commutator formulas in 
the absolute case, see \cite{Suslin, borvav, Vaserstein_normal, Stepanov_Vavilov_decomposition, Bak}, etc., consult \cite{RN}
for a detailed survey.

A first version of following result was discovered (in a slightly
less precise form) by Roozbeh Hazrat and the second author, see \cite{Hazrat_Zhang_multiple}, Lemma 12. In exactly this form 
it is stated in our paper \cite{Hazrat_Vavilov_Zhang}, Theorem~3A.
The second type of generators below are the {\it elementary 
commutators\/}
$$ y_{ij}(a,b)=[t_{ij}(a),t_{ji}(b)] $$
\noindent
$1\le i\neq j\le n$, $a\in A$, $b\in B$. They belong already to 
the mixed commutator of the corresponding {\it unrelativised\/} subgroups $[E(n,A),E(n,B)]$.

\begin{oldtheorem}
Let $R$ be a quasi-finite ring with $1$, let $n\ge 3$, and let $A,B$ 
be two-sided ideals of $R$. Then the mixed commutator subgroup 
$[E(n,R,A),E(n,R,B)]$ is generated as a group by the elements of the form
\par\smallskip
$\bullet$ $z_{ij}(ab,c)$ and $z_{ij}(ba,c)$,
\par\smallskip
$\bullet$ $ y_{ij}(a,b)$,
\par\smallskip
$\bullet$ $[t_{ij}(a),z_{ij}(b,c)]$,
\par\smallskip\noindent
where $1\le i\neq j\le n$, $a\in A$, $b\in B$, $c\in R$.
\end{oldtheorem}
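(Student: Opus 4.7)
My plan is to reduce to basic commutators $[z_{ij}(a,c),z_{hk}(b,d)]$ and then run a case analysis on the index pattern. Let $N$ denote the subgroup generated by the three listed families. By the Stein--Tits--Vaserstein generation recalled in the excerpt, $E(n,R,A)$ is generated as a group by the elementary conjugates $z_{ij}(a,c)$ with $a\in A$, $c\in R$, and similarly for $E(n,R,B)$. Iterated use of the commutator identities $[xy,z]=\lsup{y^{-1}}[x,z]\cdot [y,z]$ and $[x,yz]=[x,y]\cdot\lsup{y^{-1}}[x,z]$ then writes every element of $[E(n,R,A),E(n,R,B)]$ as a product of conjugates of basic commutators $[z_{ij}(a,c),z_{hk}(b,d)]$ by elements of $\langle E(n,R,A),E(n,R,B)\rangle\subseteq E(n,R)$. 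Hence it suffices to prove (i) that $N$ is normal in $E(n,R)$ and (ii) that every basic commutator lies in $N$.

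For (i) I would check invariance of each of the four generator types of $N$ under conjugation by $t_{pq}(r)$, $r\in R$. This is a direct computation with the Chevalley commutator formula $[t_{ij}(a),t_{pq}(r)]\in\{t_{iq}(ar),t_{pj}(-ra),1\}$, using the two-sidedness of $A$ and $B$ to reabsorb products like $rab$, $bas$ back into $A$ or $B$, so that conjugates of $z_{ij}(ab,c)$, $z_{ij}(ba,c)$, $y_{ij}(a,b)$ and $[t_{ij}(a),z_{ij}(b,c)]$ expand into products of elements of the same four types. The quasi-finite hypothesis enters here to guarantee that sums of products arising after Chevalley expansion can be rewritten in a controlled way with finitely many generators of the ideals at a time.

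For (ii) the cases are organized by the intersection pattern of $\{i,j\}$ and $\{h,k\}$. When $\{i,j\}\cap\{h,k\}=\emptyset$, the transvections $t_{ij}(a)$ and $t_{hk}(b)$ commute, and expanding the $z$'s produces only factors of the form $t_{pq}(\ast ab\ast)$ and $t_{pq}(\ast ba\ast)$ conjugated by $t_{ji}(c)$ or $t_{kh}(d)$, each of which is a generator of type $z_{pq}(ab,\cdot)$ or $z_{pq}(ba,\cdot)$. When $|\{i,j\}\cap\{h,k\}|=1$, Chevalley gives one extra transvection $t_{pq}(\pm ab)$ or $t_{pq}(\pm ba)$, again absorbed into the first family. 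When $\{i,j\}=\{h,k\}$, the commutator lives in the rank-one subgroup spanned by the root subgroups indexed by $(i,j)$ and $(j,i)$; here one extracts a $y_{ij}(a,b)$ and a $[t_{ij}(a),z_{ij}(b,c)]$-term, modulo correction terms which fall into the earlier subcases.

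The principal obstacle is precisely this last subcase $\{i,j\}=\{h,k\}$: with no third free index available, the Chevalley formalism gives no reduction, and one must work inside the $\SL_2$-subgroup, opening $z_{ij}(a,c)=t_{ji}(c)t_{ij}(a)t_{ji}(-c)$ and carefully commuting the outer $t_{ji}$-factors past $z_{hk}(b,d)$ using the already-handled disjoint-index identities to isolate a $y_{ij}$- and a $[t_{ij},z_{ij}]$-factor. Controlling the residual error terms so that they genuinely lie in $N$ — rather than only in some a priori larger subgroup of $[E(n,R,A),E(n,R,B)]$ — is the delicate step, and it is exactly this case that forces the third family of generators to be present.
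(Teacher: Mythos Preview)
The paper does not prove Theorem~A; it is quoted as a known result from \cite{Hazrat_Zhang_multiple} and \cite{Hazrat_Vavilov_Zhang}. In those papers the proof proceeds by \emph{relative localisation}: one uses the quasi-finite hypothesis to perform a Noetherian reduction and then localise, ultimately reducing to a situation where the standard commutator formula is available. The quasi-finite assumption is thus a structural input to the localisation machinery, not something that intervenes in any elementary Chevalley-type expansion. Your explanation that quasi-finiteness is needed ``to guarantee that sums of products arising after Chevalley expansion can be rewritten in a controlled way'' is off target: in $\GL_n$ the commutator relations $[t_{ij}(a),t_{jk}(b)]=t_{ik}(ab)$ produce no sums at all. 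Indeed, the paper's Theorem~B records that the same (and stronger) conclusion holds over \emph{arbitrary} associative rings, so quasi-finiteness was an artefact of the original method, not of the statement.

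Your outline is a plausible sketch of a direct attack, but it has a genuine gap at step~(i). You assert that normality of $N$ in $E(n,R)$ is ``a direct computation with the Chevalley commutator formula'', yet the very conjugates you must control are of the same shape as the basic commutators in step~(ii). For example, ${}^{t_{ji}(r)}[t_{ij}(a),z_{ij}(b,c)]=[z_{ij}(a,r),z_{ij}(b,c+r)]$, which is precisely a basic commutator in the hardest subcase $\{i,j\}=\{h,k\}$ that you yourself flag as ``the principal obstacle''. So (i) is not easier than (ii); the two are entangled, and neither is actually carried out. Likewise, conjugating $y_{ij}(a,b)$ by $t_{ij}(r)$ or $t_{ji}(r)$ does not obviously reduce to the listed generators without first controlling commutators of the form $[z_{ij}(a,c),z_{ji}(b,d)]$. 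In short, the decisive computation --- showing that the rank-one commutators $[z_{ij}(a,c),z_{hk}(b,d)]$ with $\{h,k\}=\{i,j\}$ lie in $N$, and that this is stable under $E(n,R)$-conjugation --- is asserted but not performed, and that computation is essentially the whole content of the theorem.
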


Here, the elementary conjugates generate
$E(n,R,A\circ B)$, where $A\circ B=AB+BA$ is the symmetrised
product of ideals.
Subsequently, we proved the following result, which
is both {\it terribly\/} much stronger, and much more general 
than Theorem~A and which completely solves \cite{Hazrat_Vavilov_Zhang}, Problem 1, for the case of $\GL_n$.
First, in \cite{NV18, NZ2} we noticed that the third type of
generators are redundant. Then in \cite{NV19, NZ1, NZ3} we observed 
that modulo $E(n,R,A\circ B)$ it suffices to take elementary 
commutators for a single position, and that everything works 
over {\it arbitrary\/} associative rings.

\begin{oldtheorem}
Let $R$ be any associative ring with $1$, let $n\ge 3$, and let $A,B$ 
be two-sided ideals of $R$. Then the mixed commutator subgroup 
$[E(n,R,A),E(n,R,B)]$ is generated as a group by the elements of the form
\par\smallskip
$\bullet$ $z_{ij}(ab,c)$ and $z_{ij}(ba,c)$,
\par\smallskip
$\bullet$ $y_{ij}(a,b)$,
\par\smallskip\noindent
where $1\le i\neq j\le n$, $a\in A$, $b\in B$, $c\in R$. 
Moreover, for the second type of generators, it suffices to fix 
one pair of indices $(i,j)$.
\end{oldtheorem}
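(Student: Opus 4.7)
I would attack Theorem~B in two stages corresponding to the two improvements over Theorem~A: first, eliminate the third type of generators $[t_{ij}(a), z_{ij}(b,c)]$; second, reduce the second type to a single fixed pair of indices $(i,j)$.

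For the first stage, expanding $y_{ji}(c,b) = t_{ji}(c) t_{ij}(b) t_{ji}(-c) t_{ij}(-b)$ gives the identity $z_{ij}(b,c) = y_{ji}(c,b) \cdot t_{ij}(b)$. Since $t_{ij}(a)$ commutes with $t_{ij}(b)$, the standard identity $[x, yz] = [x,y] \cdot {}^y[x,z]$ yields
$$ [t_{ij}(a), z_{ij}(b,c)] = [t_{ij}(a), y_{ji}(c,b)]. $$
Thus it remains to place $[t_{ij}(a), y_{ji}(c,b)]$ in the subgroup $N$ generated by types 1 and 2. The Hall--Witt identity applied to the triple $(t_{ij}(a), t_{ji}(c), t_{ij}(b))$ has its third term trivial because $t_{ij}(a)$ and $t_{ij}(b)$ commute, leaving a relation that reduces $[t_{ij}(a), y_{ji}(c,b)]$ to $[t_{ji}(c), y_{ij}(a,b)]$ conjugated by transvections. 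Since $y_{ij}(a,b)$ itself is a type-2 generator, the remaining task is to verify that conjugation of a $y$-element by $t_{ji}(c)$ stays in $N$; a direct Steinberg-style expansion accomplishes this, as all factors produced carry parameters in $AB + BA$.

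For the second stage, given any pair $(h,k) \ne (i,j)$, choose a signed permutation matrix $w \in E(n,R)$ (expressible as a product of transvections since $n \ge 3$, via $w_{pq} = t_{pq}(1) t_{qp}(-1) t_{pq}(1)$) satisfying $w y_{ij}(a,b) w^{-1} = y_{hk}(\pm a, \pm b)$. Rewriting as $y_{hk}(\pm a, \pm b) = [w, y_{ij}(a,b)] \cdot y_{ij}(a,b)$, it suffices to show $[w, y_{ij}(a,b)] \in E(n,R, A \circ B)$, and by a further reduction via $[uv, x] = {}^u[v,x] \cdot [u,x]$ to a single transvection $w = t_{pq}(r)$, to prove $[t_{pq}(r), y_{ij}(a,b)] \in E(n,R, A \circ B)$. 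Applying Hall--Witt to $(t_{pq}(r), t_{ij}(a), t_{ji}(b))$ and expanding, each resulting commutator carries at least one parameter that is a product involving an element of $A$ and an element of $B$, hence lies in $A \circ B$.

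The main obstacle is the bookkeeping in the Hall--Witt applications: one must enumerate the overlap patterns of the index tuples $\{i,j,p,q\}$ and verify that every commutator produced either has its parameter in $A \circ B$, thus being of type 1, or is absorbed into $N$ by an already-available type-2 generator. Particularly delicate are the degenerate cases where $\{p,q\}$ meets $\{i,j\}$, where cancellations make a direct matrix computation cleaner than the abstract commutator calculus. Once this verification is complete, Theorem~B follows from Theorem~A by substituting each type-3 generator via stage one and each $y_{hk}$ with $(h,k) \ne (i,j)$ via stage two.
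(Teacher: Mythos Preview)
Your two-stage plan is reasonable in outline, but the execution of the crucial steps does not go through as written.

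\textbf{Stage 1.} The Hall--Witt identity applied to $(t_{ij}(a),t_{ji}(c),t_{ij}(b))$ does \emph{not} produce $[t_{ji}(c),y_{ij}(a,b)]$. The vanishing term is indeed the one coming from $[t_{ij}(a),t_{ij}(b)]=e$, but the two surviving terms are (up to signs and conjugation) of the form
\[
\big[t_{ij}(a),\,y_{ji}(\pm c,\pm b)\big]\qquad\text{and}\qquad
\big[t_{ij}(b),\,y_{ij}(\pm a,\pm c)\big],
\]
both of which still carry the free parameter $c\in R$ inside a $y$-symbol. The second expression rewrites as a conjugate of $[t_{ij}(b),z_{ij}(-a,c)]$, i.e.\ a type-3 generator with the roles of $A$ and $B$ interchanged. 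So Hall--Witt only shuffles one type-3 generator into another; it does not reduce to $[t_{ji}(c),y_{ij}(a,b)]$.

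\textbf{Stage 2.} Your blanket claim that every factor produced by Hall--Witt on $(t_{pq}(r),t_{ij}(a),t_{ji}(b))$ carries a parameter in $A\circ B$ is false exactly in the degenerate cases $(p,q)\in\{(i,j),(j,i)\}$. For $(p,q)=(j,i)$ the surviving non-target term is a conjugate of $[y_{ji}(r,a),t_{ji}(b)]$, again a type-3 expression with $r\in R$ sitting inside the $y$. A ``direct matrix computation'' confined to the $2\times 2$ block on $\{i,j\}$ cannot help here: the whole calculation lives in $\GL(2)$, where the theorem does not hold. These bad cases can in fact be \emph{avoided} by routing your Weyl conjugations through a third index (e.g.\ $y_{ij}\to y_{hj}\to y_{hi}\to y_{ji}$ with $h\neq i,j$, so that at each step Lemma~4 applies), but you neither say this nor carry it out.

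\textbf{What the paper does.} The paper's route (Lemma~6, taken from \cite{NZ1,NZ3}) is to bring in a third index $h\neq i,j$ from the very start: writing $t_{ij}(-a)=[t_{ih}(a),t_{hj}(-1)]$ and expanding ${}^{t_{ji}(b)}t_{ij}(-a)$ accordingly yields an explicit identity expressing $y_{ij}(a,b)$ as a product of $y_{ih}(a,b)$, a single $z_{jh}(ba,c)$, and elements of $E(n,A\circ B)$. This one computation simultaneously shows that the position of the $y$-generator can be moved freely modulo type-1 generators (your Stage~2) and, essentially by the same expansion, that type-3 generators are redundant (your Stage~1). The missing idea in your proposal is precisely this use of the auxiliary index $h$: working only inside the rank-one subgroup on $\{i,j\}$, as your Hall--Witt and matrix computations do, cannot separate the free parameter $c$ from the $y$-symbol.
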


In particular, this last result implies that for $n\ge 3$ one has
$$ [E(n,R,A),E(n,R,B)]=[E(n,A),E(n,B)] $$
\noindent
and in the body of the paper we usually prefer the shorter notation. 
\par
In the present paper we generalise both Theorem A, and the
first claim of Theorem B, in a completely different {\it unexpected\/}
direction. Namely, we prove that instead of limiting the stock
of elementary commutators, one can limit the stock of 
elementary conjugates! The following theorem asserts that
$[E(n,R,A),E(n,R,B)]$ is generated by the elementary commutators
not just over $E(n,R,A\circ B)$, as in our previous papers
\cite{NZ1, NZ2, NZ3}, but already over the unrelativised elementary subgroup $E(n,A\circ B)$.

\begin{The}\label{t1}
Let $A$ and $B$ be two ideals of an associative ring $R$ and let
$n\ge 3$. Then the mixed commutator subgroup 
$[E(n,R,A),E(n,R,B)]$ is generated by the elementary commutators
$[t_{ij}(a),t_{hk}(b)]$, where $1\le i\neq j\le n$, $1\le h\neq k\le n$,
$a\in A$ and $b\in B$.
\end{The}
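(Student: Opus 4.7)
The plan is to combine Theorem~B with a direct Chevalley-style computation. Let $H\subseteq E(n,R)$ denote the subgroup generated by the elementary commutators $[t_{ij}(a),t_{hk}(b)]$ with $a\in A$, $b\in B$; the inclusion $H\subseteq[E(n,R,A),E(n,R,B)]$ is tautological. Theorem~B writes $[E(n,R,A),E(n,R,B)]$ as the subgroup generated by the three families $z_{ij}(ab,c)$, $z_{ij}(ba,c)$, and $y_{ij}(a,b)$. Since $y_{ij}(a,b)=[t_{ij}(a),t_{ji}(b)]$ is already among the defining generators of $H$, the proof reduces to showing $z_{ij}(ab,c),z_{ij}(ba,c)\in H$. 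A first observation: $E(n,A\circ B)\subseteq H$. Picking $h\ne i,j$ (available since $n\ge 3$), the identity $t_{ij}(ab)=[t_{ih}(a),t_{hj}(b)]\in H$ exhibits each additive generator of $E(n,AB)$ as an elementary commutator, and the same identity with $a$ and $b$ swapped handles $E(n,BA)$.

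For the main step I would apply the Chevalley conjugation formulas
\[
{}^{t_{ji}(c)}t_{ih}(a)=t_{ih}(a)\,t_{jh}(ca),\qquad {}^{t_{ji}(c)}t_{hj}(b)=t_{hj}(b)\,t_{hi}(-bc)
\]
to the factorisation of $t_{ij}(ab)$, obtaining
\[
z_{ij}(ab,c)=\bigl[t_{ih}(a)\,t_{jh}(ca),\ t_{hj}(b)\,t_{hi}(-bc)\bigr].
\]
Since $t_{ih}(a)$ and $t_{jh}(ca)$ commute (shared column~$h$), and likewise $t_{hj}(b)$ and $t_{hi}(-bc)$ commute (shared row~$h$), the standard expansion of $[X_1X_2,Y_1Y_2]$ under $[X_1,X_2]=[Y_1,Y_2]=1$ produces a product whose atomic pieces are the four pair-commutators
\[
[t_{ih}(a),t_{hj}(b)]=t_{ij}(ab),\quad [t_{jh}(ca),t_{hi}(-bc)]=t_{ji}(-cabc),
\]
\[
[t_{ih}(a),t_{hi}(-bc)]=y_{ih}(a,-bc),\quad [t_{jh}(ca),t_{hj}(b)]=y_{jh}(ca,b).
\]
The first two are transvections whose coefficients lie in the two-sided ideal $AB$, so they belong to $E(n,A\circ B)\subseteq H$; the last two are elementary commutators, directly in $H$. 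The four atomic pieces are then glued together by conjugations through the transvections $t_{ih}(a),t_{jh}(ca)\in E(n,A)$ and $t_{hj}(b),t_{hi}(-bc)\in E(n,B)$.

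The final task is to absorb these conjugations. Each is handled by the identity ${}^g\xi=[g,\xi]\cdot\xi$: conjugating a piece $\xi\in H$ by $g$ leaves $\xi$ together with a new commutator $[g,\xi]$, which I claim is again in $H$. When $\xi$ is a transvection in $E(n,A\circ B)$ this is immediate from the Chevalley formulas, since $[g,\xi]$ is either trivial or a single transvection with coefficient in $A\circ B$. When $\xi$ is a $y$-element, a somewhat longer Chevalley computation inside the rank-three unipotent configuration $\{i,j,h\}$ decomposes $[g,\xi]$ as a product of an elementary commutator and a transvection in $E(n,A\circ B)$, both in $H$. Iterating the absorption a finite number of times collapses the whole expansion into $H$, whence $z_{ij}(ab,c)\in H$; the case of $z_{ij}(ba,c)$ is handled by the same argument with $A$ and $B$ interchanged. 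The main technical obstacle will be this last Chevalley case check on the $y$-elements, which crucially uses the flexibility granted by the auxiliary index $h$ and carries the substance of the argument.
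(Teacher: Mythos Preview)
Your proposal is correct and is essentially the paper's own argument: the paper packages exactly this computation as Lemma~7, expanding $z_{ij}(ab,c)=[t_{jh}(ca)t_{ih}(a),\,t_{hj}(b)t_{hi}(-bc)]$ into four pieces and then absorbing the residual conjugations via ${}^g\xi=[g,\xi]\cdot\xi$, with the ``somewhat longer Chevalley computation'' you allude to being precisely Lemma~4. One small correction to your narrative: when $\xi$ is a $y$-element and $g$ is a transvection touching the auxiliary index~$h$, Lemma~4 shows $[g,\xi]$ is already a product of two transvections in $E(n,A\circ B)$ (not an elementary commutator plus a transvection), which only makes that case easier than you anticipated.
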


We find this result truly astounding. In fact, it is well known that 
the relative elementary subgroups $E(n,R,A)$ themselves are not 
generated by the elementary transvections $t_{ij}(a)$, where 
$1\le i\neq j\le n$, $a\in A$. How come that their mixed commutator
subgroups are?

Actually, Theorem 1 immediately implies many remarkable 
corollaries. In particular, it allows to consider elementary 
commutators $y_{ij}(a,b)$, $a\in A$, $b\in B$, modulo the
{\it true\/} (= unrelative) elementary subgroup $E(n,A\circ B)$ 
of level $A\circ B$,
and not modulo the corresponding relative elementary subgroup 
$E(n,R,A\circ B)$, as we were doing in \cite{NZ1, NZ3, NZ4}.  

The balance of the paper is organised as follows.
In \S~1 we recall some basic facts concerning elementary 
subgroups and their mixed commutator subgroups. In \S~2
we prove Lemma~6 that allows to move around elementary commutators. It is essentially a slight improvement of
our results from \cite{NZ1, NZ3, NZ4} that is not directly needed 
to prove Theorem 1 itself, but serves as a model, and is crucial 
to establish Theorem 4. 
After that in \S~3 we prove Lemma 7 which is essentially a 
slightly more precise form both of level computations and of 
Theorem 1. The rest of the paper are refinements and applications. 
\par\smallskip
$\bullet$ In \S~4 we derive a generation result 
for partially relativised elementary groups 
$E(n,B,A)=E(n,A)^{E(n,B)}$, where $A$ and $B$ are two ideals, Theorem 2. 
\par\smallskip
$\bullet$ In \S~5 prove that already the true elementary
subgroup $E(n,A\circ B)$ is normal in $[E(n,A),E(n,B)]$, with 
abelian quotient $[E(n,A),E(n,B)]/E(n,A\circ B)$, Theorem 3.
\par\smallskip
$\bullet$ In \S~6 we notice that Theorem 1 admits a remarkable 
generalisation. Namely the generating set therein can be further 
substantially reduced, allowing only elementary commutators 
corresponding to some positions in the unipotent radical of a 
maximal parabolic subgroup plus the elementary commutators
in {\it one more\/} position, Theorem~4.
\par\smallskip
$\bullet$ In \S~7 we apply the above results to the generation 
of mixed commutator subgroups. Modulo the results of our
previous paper \cite{NZ3}, we are now in a position to exhibit
very economical generating sets for the multiple commutator
subgroups $\llbracket E(n,I_1),\ldots,E(n,I_m)\rrbracket$,
Theorem 5. 
\par\smallskip\noindent
Finally, in \S~8 we briefly mention some further applications
and unsolved problems.


\section{Relative subgroups}

Let $G=\GL(n,R)$ be the general linear group of degree $n$ over an
associative ring $R$ with 1. In the sequel for a matrix
$g\in G$ we denote by $g_{ij}$ its matrix entry in the position
$(i,j)$, so that $g=(g_{ij})$, $1\le i,j\le n$. The inverse of
$g$ will be denoted by $g^{-1}=(g'_{ij})$, $1\le i,j\le n$.
\par
As usual we denote by $e$ the identity matrix of degree $n$
and by $e_{ij}$ a standard matrix unit, i.~e., the matrix
that has 1 in the position $(i,j)$ and zeros elsewhere.
An elementary transvection $t_{ij}(\xi)$ is a matrix of the
form $t_{ij}(c)=e+c e_{ij}$, $1\le i\neq j\le n$, $c\in R$. 
\par
Further, let $A$ be a two-sided of $R$. We consider the
corresponding reduction homomorphism
$$ \pi_A:\GL(n,R)\map\GL(n,R/A),\quad
(g_{ij})\mapsto(g_{ij}+A). $$
\noindent
Now, the {\it principal congruence subgroup} $\GL(n,R,A)$ of level $A$
is the kernel $\pi_A$, 
\par
The {\it unrelative elementary subgroup} $E(n,A)$ of level $A$
in $\GL(n,R)$ is generated by all elementary matrices of level 
$A$. In other words,
$$ E(n,A)=\langle e_{ij}(a),\ 1\le i\neq j\le n,\ a\in A \rangle. $$
\noindent
In general $E(n,A)$ has little chances to be normal in $\GL(n,R)$. 
The {\it relative elementary subgroup} $E(n,R,A)$  of level $A$
is defined as the normal closure of $E(n,A)$ in the absolute 
elementary subgroup $E(n,R)$:
$$ E(n,R,A)=\langle e_{ij}(a),\ 1\le i\neq j\le n,\ a\in A 
\rangle^{E(n,R)}. $$

The following lemma in generation of relative elementary subgroups
$E(n,R,A)$ is a classical result discovered in various contexts by 
Stein, Tits and Vaserstein, see, for instance, \cite{Vaserstein_normal}
(or \cite{Hazrat_Vavilov_Zhang}, Lemma 3, for a complete elementary
proof). It is stated in terms of the {\it Stein---Tits---Vaserstein 
generators\/}):
$$ z_{ij}(a,c)=t_{ij}(c)t_{ji}(a)t_{ij}(-c),\qquad
1\le i\neq j\le n,\quad a\in A,\quad c\in R. $$

\begin{lemma}
Let $R$ be an associative ring with $1$, $n\ge 3$, and let $A$ 
be a two-sided ideal of $R$. Then
$$ E(n,R,A) = \big\langle z_{ij}(a,c),\ 1\le i\neq j\le n, 
a\in A, c\in R \big\rangle. $$
\end{lemma}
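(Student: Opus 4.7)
Let $H = \langle z_{ij}(a,c) : 1\le i\neq j\le n,\ a\in A,\ c \in R \rangle$ denote the subgroup on the right. The inclusion $H \subseteq E(n,R,A)$ is immediate: each $z_{ij}(a,c) = t_{ij}(c) t_{ji}(a) t_{ij}(-c)$ is the $t_{ij}(c)$-conjugate of $t_{ji}(a) \in E(n,A) \subseteq E(n,R,A)$, and $E(n,R,A)$ is normal in $E(n,R)$ by construction. For the opposite inclusion, observe that $z_{ij}(a,0) = t_{ji}(a)$, whence $E(n,A) \subseteq H$. Since $E(n,R,A)$ is \emph{defined} as the normal closure of $E(n,A)$ in $E(n,R)$, it now suffices to show that $H$ is normalised by $E(n,R)$; and since $E(n,R)$ is generated by elementary transvections, this reduces to checking that $t_{kl}(d)\, z_{ij}(a,c)\, t_{kl}(-d) \in H$ for every generator $z_{ij}(a,c)$ and every $t_{kl}(d)$.

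\textbf{The easy cases.} Write the conjugate as $w_1 w_2 w_1^{-1}$ with $w_1 = t_{kl}(d) t_{ij}(c) t_{kl}(-d)$ and $w_2 = t_{kl}(d) t_{ji}(a) t_{kl}(-d)$, and do case analysis on how $\{k,l\}$ meets $\{i,j\}$, using the Chevalley commutator identities $[t_{pq}(x), t_{qs}(y)] = t_{ps}(xy)$ (when $p\neq s$) and $[t_{pq}(x), t_{rp}(y)] = t_{rq}(-yx)$ (when $q\neq r$), plus commutativity when the relevant indices are disjoint. In every such ``easy'' case each $w_\nu$ simplifies to a short product of transvections; crucially, any extra transvection produced by rewriting $w_2$ carries a coefficient in $A$ (because $a \in A$) and so sits in $E(n,A) \subseteq H$. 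Reassembling $w_1 w_2 w_1^{-1}$ one checks that it decomposes as a product of $z$-generators and elements of $E(n,A)$, all of which are in $H$. In particular, when $(k,l)=(i,j)$ the three factors telescope directly to $z_{ij}(a, c+d)$, a generator of $H$.

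\textbf{The main obstacle.} The one genuinely delicate case is the $A_1$-configuration $(k,l) = (j,i)$: here $w_1 = t_{ji}(d) t_{ij}(c) t_{ji}(-d)$ is not a transvection, and no single Chevalley identity governs it. This is precisely where the hypothesis $n \ge 3$ is used essentially. Pick an auxiliary index $h \notin \{i,j\}$ and apply the Chevalley factorisation $t_{ij}(c) = [t_{ih}(c), t_{hj}(1)]$. Then
$$w_1 \;=\; \bigl[\, t_{ji}(d) t_{ih}(c) t_{ji}(-d),\ t_{ji}(d) t_{hj}(1) t_{ji}(-d)\,\bigr],$$
and each of the two inner conjugates now does fall under the easy Chevalley cases, because both $(i,h)$ and $(h,j)$ share only one index with $(j,i)$. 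A further routine application of the Chevalley identities rewrites $w_1$ as an explicit product of transvections, after which $w_1 w_2 w_1^{-1}$ factors as a product of elements already known to lie in $H$. With this last case handled, $H$ is normal in $E(n,R)$, and the second inclusion $E(n,R,A) \subseteq H$ follows, completing the proof.
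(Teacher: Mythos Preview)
The paper does not actually prove this lemma: it records it as a classical result of Stein, Tits, and Vaserstein and points to \cite{Vaserstein_normal} and \cite{Hazrat_Vavilov_Zhang}, Lemma~3, for a full proof. So there is no in-paper argument to compare against; your outline \emph{is} the standard proof those references give.

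Your strategy is correct, and the easy cases are fine. The one place that is too thin is the hard case $(k,l)=(j,i)$. You write $w_1={}^{t_{ji}(d)}t_{ij}(c)$ as a product of transvections in positions $(j,h),(i,h),(h,i),(h,j)$ and then assert that conjugating $w_2=t_{ji}(a)$ by this product ``factors as a product of elements already known to lie in $H$''. The subtlety is that after one step the intermediate result is no longer a single $z_{ij}$ but a product involving generators in \emph{new} positions (for instance $z_{hi}(\cdot,\cdot)$ or $t_{jh}(\cdot)$); the next factor of $w_1$ may then meet one of these in its own $A_1$-configuration (e.g.\ $t_{hi}$ acting on something in position $(i,h)$), and you have not yet established that such a conjugation lands in $H$. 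So as written the inductive step is circular.

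The fix is small but should be made explicit. One clean route: factor the \emph{level-$A$} transvection instead, $t_{ji}(a)=[t_{jh}(a),t_{hi}(1)]$, and push the two outer conjugations ${}^{t_{ji}(d)}\,{}^{t_{ij}(c)}$ through this commutator. All four conjugations of $t_{jh}(a)$ and $t_{hi}(1)$ by $t_{ij}(c)$ and $t_{ji}(d)$ are genuine Chevalley cases, and after simplification (using that $t_{hi}$ and $t_{hj}$ commute) you obtain
\[
{}^{t_{ji}(d)}z_{ij}(a,c)=\bigl[u,\ t_{hj}(-c)\,t_{hi}(1+cd)\bigr],\qquad u\in E(n,A).
\]
Now conjugate $u$ first by $t_{hi}(1+cd)$ and then by $t_{hj}(-c)$: each $A_1$-collision that appears produces an honest $z$-generator with parameter in $A$ (e.g.\ ${}^{t_{hi}(r)}t_{ih}(ca)=z_{hi}(ca,r)$), and every remaining conjugation is an already-handled easy case for the generator in question. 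Writing out these two lines removes the circularity and completes the argument.
\par
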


We also need the following results on the mixed 
commutators $[E(n,A),E(n,B)]$.
Everywhere below the commutators are left-normed 
so that for two elements $x,y$ of a group $G$ one has 
$[x,y]={}^{x}y\cdot y^{-1}=x\cdot{}^y(x^{-1})=xyx^{-1}y^{-1}$,
where ${}^xy=xyx^{-1}$ is the left conjugate of $y$ by $x$. In the sequel
we repeatedly use standard commutator identities such as 
$[xy,z]={}^x[y,z]\cdot [x,z]$ or $[x,yz]=[x,y]\cdot{}^y[x,z]$ 
without any explicit reference.

Denote by $A\circ B=AB+BA$ the symmetrised product of two-sided ideals $A$ and $B$. For commutative rings, $A\circ B=AB=BA$
is the usual product of ideals $A$ and $B$. However, in general, 
the symmetrised product is not associative. Thus, when writing something like $A\circ B\circ C$, we have to specify the order 
in which products are formed. The following level computation is
standard, see, for instance, \cite{Vavilov_Stepanov_standard,
Vavilov_Stepanov_revisited, Hazrat_Vavilov_Zhang}, and references
there.

\begin{lemma}
$R$ be an associative ring with $1$, $n\ge 3$, and let $A$ and $B$
be two-sided ideals of $R$.  Then 
$$ E(n,R,A\circ B)\le\big[E(n,A),E(n,B)\big]\le
\big[E(n,R,A),E(n,R,B)\big] \le\GL(n,R,A\circ B). $$
\end{lemma}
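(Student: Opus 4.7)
The plan is to set $H:=\langle[t_{ij}(a),t_{hk}(b)]:a\in A,\ b\in B,\ i\ne j,\ h\ne k\rangle$. The inclusion $H\subseteq[E(n,R,A),E(n,R,B)]$ is immediate. For the reverse inclusion I appeal to the already-established Theorem~B: the mixed commutator subgroup is generated as a group by the elementary commutators $y_{ij}(a,b)$ (which lie in $H$ by definition) together with the elementary conjugates $z_{ij}(ab,c)$ and $z_{ij}(ba,c)$ for $a\in A$, $b\in B$, $c\in R$. The task thus reduces to proving $z_{ij}(ab,c)\in H$; the case $z_{ij}(ba,c)$ is symmetric.

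The computational heart of the plan is the following. Pick an auxiliary index $k\notin\{i,j\}$ (available because $n\ge 3$), write $t_{ij}(ab)=[t_{ik}(a),t_{kj}(b)]$ by Chevalley, and conjugate by $t_{ji}(c)$. A direct Chevalley computation (both inner conjugations are non-long since $k\ne i,j$) gives
\[
z_{ij}(ab,c)=\bigl[\,t_{jk}(ca)\,t_{ik}(a),\ t_{ki}(-bc)\,t_{kj}(b)\,\bigr].
\]
Expanding via the identities $[xy,z]={}^{x}[y,z]\cdot[x,z]$ and $[x,yz]=[x,y]\cdot{}^{y}[x,z]$ yields a product of four factors, each a conjugate of one of the four basic commutators $[t_{ik}(a),t_{ki}(-bc)]=y_{ik}(a,-bc)$, $[t_{ik}(a),t_{kj}(b)]=t_{ij}(ab)$, $[t_{jk}(ca),t_{ki}(-bc)]=t_{ji}(-cabc)$, and $[t_{jk}(ca),t_{kj}(b)]=y_{jk}(ca,b)$. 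Each of these four is itself an elementary commutator with first entry in $A$ and second entry in $B$ (using that $A$ and $B$ are two-sided ideals), so each lies in $H$ by definition.

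The main obstacle will be absorbing the outer conjugations into $H$: each conjugating factor in the four-term expansion is a transvection (or product of two transvections) whose entries lie in $A$ or in $B$. The needed technical core will therefore be an auxiliary lemma asserting that conjugation of any generator $[t_{pq}(a'),t_{rs}(b')]$ of $H$ by a transvection $t_{lm}(c')$ with $c'\in A$ or $c'\in B$ again produces an element of $H$. I expect to prove this by re-applying the same expansion one level deeper: Chevalley rewrites each of ${}^{t_{lm}(c')}t_{pq}(a')$ and ${}^{t_{lm}(c')}t_{rs}(b')$ as a product of at most two transvections whose new entries ($c'a'$, $a'c'$, $c'b'$, $b'c'$) remain in $A$, respectively $B$, by the ideal property, and the resulting four-term expansion is built from basic commutators that again belong to $H$. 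The delicate case is the long overlap, where the naive Chevalley step produces a $z$-element rather than a two-term product; this will be bypassed by first Chevalley-decomposing the relevant transvection through a third index and reducing to the short situation already treated. Termination is guaranteed because every new transvection entry produced along the way is forced into ever tighter ideals ($A\cap B$, $A\circ B$, etc.), each of which, thanks again to $n\ge 3$, admits a clean Chevalley decomposition as an honest elementary commutator with first entry in $A$ and second in $B$, that is, a generator of $H$.
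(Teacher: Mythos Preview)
You are proving the wrong statement. The lemma at hand is the \emph{level computation}
\[
E(n,R,A\circ B)\le[E(n,A),E(n,B)]\le[E(n,R,A),E(n,R,B)]\le\GL(n,R,A\circ B),
\]
a chain of three inclusions. Your proposal instead sets out to show that $[E(n,R,A),E(n,R,B)]$ is \emph{generated} by the elementary commutators $[t_{ij}(a),t_{hk}(b)]$ --- that is Theorem~1 of the paper, not this lemma. Nothing in your argument touches the rightmost inclusion $[E(n,R,A),E(n,R,B)]\le\GL(n,R,A\circ B)$, and you invoke Theorem~B, which already presupposes this level computation in the paper's logical order.

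For the record, the paper treats this lemma as standard and gives no proof, citing earlier work. The middle inclusion is trivial. The leftmost one is exactly the calculation you sketch (write $t_{ij}(ab)=[t_{ik}(a),t_{kj}(b)]$ and conjugate by $t_{ji}(c)$, then check every factor lies in $[E(n,A),E(n,B)]$); the paper refers to this as the content of \cite{Hazrat_Vavilov_Zhang}, Lemma~1A. The rightmost inclusion is a separate matrix computation: one checks directly that any commutator $[g,h]$ with $g\equiv e\pmod A$ and $h\equiv e\pmod B$ satisfies $[g,h]\equiv e\pmod{A\circ B}$, which your proposal never addresses. If you meant to prove Theorem~1, your outline is close in spirit to the paper's Lemma~7, but your ``termination'' paragraph is not an argument: the paper handles the awkward conjugations by invoking the explicit formulas of Lemma~4 rather than by an inductive descent through tighter ideals.
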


Since all generators listed in Theorem~B belong already to the
commutator subgroup of unrelative elementary subgroups, we
get the following corollary, \cite{NZ2}, Theorem 2.

\begin{lemma}
Let $R$ be any associative ring with $1$, let $n\ge 3$, and let $A,B$ 
be two-sided ideals of $R$.  Then one has
$$ \big[E(n,R,A),E(n,R,B)\big]=\big[E(n,R,A),E(n,B)\big]=\big[E(n,A),E(n,B)\big]. $$
\end{lemma}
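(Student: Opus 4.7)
The plan is to derive this from Theorem~B combined with the level computation in Lemma~2. The obvious chain of inclusions
$$ \big[E(n,A),E(n,B)\big]\le\big[E(n,R,A),E(n,B)\big]\le\big[E(n,R,A),E(n,R,B)\big] $$
follows immediately from $E(n,A)\le E(n,R,A)$ and $E(n,B)\le E(n,R,B)$, so my task reduces to establishing the single reverse inclusion $[E(n,R,A),E(n,R,B)]\le [E(n,A),E(n,B)]$, which then forces all three groups to coincide.

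For this reverse inclusion I invoke Theorem~B, which lists explicit generators for $[E(n,R,A),E(n,R,B)]$, and I simply check that each of them already sits inside $[E(n,A),E(n,B)]$. The elementary commutators $y_{ij}(a,b)=[t_{ij}(a),t_{ji}(b)]$ are visibly in $[E(n,A),E(n,B)]$ because $t_{ij}(a)\in E(n,A)$ and $t_{ji}(b)\in E(n,B)$. The elementary conjugates $z_{ij}(ab,c)$ and $z_{ij}(ba,c)$ have levels $ab,\,ba\in A\circ B=AB+BA$, so by Lemma~1 both lie in $E(n,R,A\circ B)$; and by Lemma~2 we have $E(n,R,A\circ B)\le [E(n,A),E(n,B)]$.

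The main (and essentially only) obstacle is having Theorem~B available. Once the explicit generating set is in hand there is no commutator calculus to perform and no case analysis on indices to carry out; the proof consists of nothing more than an inspection of the two types of generators. In this sense the result is genuinely a corollary of Theorem~B, and its content is carried entirely by the strength of that theorem together with the standard level computation of Lemma~2.
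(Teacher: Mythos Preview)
Your argument is correct and is exactly the approach the paper takes: the text introduces Lemma~3 with the sentence ``Since all generators listed in Theorem~B belong already to the commutator subgroup of unrelative elementary subgroups, we get the following corollary,'' and your proposal simply spells out this observation, checking that the $y_{ij}(a,b)$ are visibly in $[E(n,A),E(n,B)]$ while the $z_{ij}(ab,c),z_{ij}(ba,c)$ land there via the inclusion $E(n,R,A\circ B)\le[E(n,A),E(n,B)]$ from Lemma~2. The appeal to Lemma~1 is harmless but slightly superfluous: $z_{ij}(ab,c)$ lies in $E(n,R,A\circ B)$ already by the definition of the relative elementary subgroup as a normal closure.
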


In particular, it follows that $[E(n,A),E(n,B)]$ is normal in
$E(n,R)$.
Our proof of Theorem 1 is based on the following computation,
which is contained within the proof of \cite{NZ1}, Lemma 3, or 
\cite{NZ3}, Lemma 9.

\begin{lemma}
Let $R$ be an associative ring with $1$, $n\ge 3$. For any 
three pair-wise distinct indices $i,j,h$ and any $a,b,c\in R$
one has
\begin{align*}
&[t_{ih}(c),y_{ij}(a,b)]=t_{ih}(-abc-ababc)t_{jh}(-babc),\\
&[t_{jh}(c),y_{ij}(a,b)]=t_{ih}(abac)t_{jh}(bac),\\
&[t_{hi}(c),y_{ij}(a,b)]=t_{hi}(cab)t_{hj}(-caba),\\
&[t_{hj}(c),y_{ij}(a,b)]=t_{hi}(cbab)t_{hj}(-cba-cbaba).
\end{align*}
\end{lemma}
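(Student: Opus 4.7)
The plan is to verify each of the four identities by reducing the commutator $[t, y_{ij}(a, b)]$ to an explicit product of Chevalley-type atomic commutators and then simplifying. Since every elementary transvection appearing acts non-trivially only on the three coordinates $\{i, j, h\}$, the entire calculation takes place inside the copy of $\GL(3, R)$ determined by these indices. Two interchangeable routes are then available: a symbolic one using the Chevalley commutator formulas together with the group-theoretic identities $[x, pqrs] = [x, p] \cdot {}^p[x, q] \cdot {}^{pq}[x, r] \cdot {}^{pqr}[x, s]$ and $xyx^{-1} = [x, y] \cdot y$, and a direct matrix computation.

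Taking the symbolic route, I would expand $y_{ij}(a, b) = t_{ij}(a) \cdot t_{ji}(b) \cdot t_{ij}(-a) \cdot t_{ji}(-b)$ and distribute the outer commutator over the four factors. For each $t \in \{t_{ih}(c), t_{jh}(c), t_{hi}(c), t_{hj}(c)\}$, the Chevalley formulas $[t_{pq}(\xi), t_{qr}(\eta)] = t_{pr}(\xi\eta)$ and $[t_{pq}(\xi), t_{rp}(\eta)] = t_{rq}(-\eta\xi)$, together with the vanishing of $[t_{pq}(\xi), t_{rs}(\eta)]$ whenever the index patterns do not match, show that exactly two of the four atomic commutators $[t, t_{ij}(\pm a)]$, $[t, t_{ji}(\pm b)]$ are non-trivial; each non-trivial one contributes a single elementary transvection in an $(*, h)$ or $(h, *)$ position. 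Substituting these back into $t \cdot y_{ij}(a,b) \cdot t^{-1}$ and multiplying by $y_{ij}(a,b)^{-1}$ allows some pair of the form $t_{ij}(\pm a) \cdot t_{ij}(\mp a)$ or $t_{ji}(\pm b) \cdot t_{ji}(\mp b)$ to cancel between the two halves. Two further applications of $xyx^{-1} = [x, y] \cdot y$ then strip off the remaining conjugations by $t_{ij}(\pm a)$ and $t_{ji}(\pm b)$, each producing new Chevalley commutators in $(*, h)$ or $(h, *)$ positions that collapse with earlier terms, until what remains is precisely two elementary transvections sharing the index $h$. As an alternative cross-check, the same identities can be verified directly in matrix form: writing $y_{ij}(a, b)$ with $(i, j)$-block $\bigl(\begin{smallmatrix} 1+ab+abab & -aba \\ bab & 1 - ba \end{smallmatrix}\bigr)$ and computing $t\, y_{ij}(a, b)\, t^{-1}\, y_{ij}(a, b)^{-1}$ as a $3\times 3$ matrix product yields a unitriangular matrix whose non-trivial entries sit in a single row or column labelled by $h$ and are visibly the claimed parameters.

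The main obstacle is bookkeeping rather than any conceptual difficulty: because $R$ is non-commutative, one must carefully track on which side of each parameter the ring element $c$ ends up. A single misplaced multiplication yields a cyclically permuted variant such as $-cab - cabab$ in place of the correct $-abc - ababc$, and the two are indistinguishable over any commutative ring. Once this is handled, only one of the four identities really needs to be written out in detail; the remaining three follow by entirely parallel manipulations, with the computations for $t_{hi}(c)$ and $t_{hj}(c)$ mirroring those for $t_{ih}(c)$ and $t_{jh}(c)$ after interchanging the roles of left and right multiplication in each Chevalley step.
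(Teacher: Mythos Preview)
Your approach is correct: these four identities are purely computational, and either the symbolic expansion via Chevalley commutator relations or the direct $3\times 3$ matrix check you describe will establish them. Your explicit block form
\[
y_{ij}(a,b)\bigl|_{\{i,j\}}=\begin{pmatrix}1+ab+abab & -aba\\ bab & 1-ba\end{pmatrix}
\]
is right, and plugging it into $t\,y_{ij}(a,b)\,t^{-1}\,y_{ij}(a,b)^{-1}$ for each of the four choices of $t$ gives the stated results at once; your remark that only one case needs to be written out in full is fair.

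As for comparison: the paper does not actually prove this lemma. It is stated with the comment that the computation ``is contained within the proof of \cite{NZ1}, Lemma~3, or \cite{NZ3}, Lemma~9'', so there is no in-paper argument to compare against. Your proposal supplies exactly the kind of direct verification those references perform; the only thing to add is that, as you note, the sole hazard is the non-commutativity bookkeeping, which your matrix route handles automatically.
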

When $a\in A$ and $b\in B$ it follows that 
$$ {}^{t_{kl}(c)}y_{ij}(a,b)\equiv y_{ij}(a,b)
\pamod{E(n,A\circ B)}, $$
\noindent
unless $(k,l)=(i,j),(j,i)$.
 

\section{Rolling over elementary commutators}

Our proof is yet another variation on the following theme.
The result was observed by Wilberd van der Kallen, as part 
of the proof 
of \cite{vdK-group}, Lemma 2.2, that modulo the elementary transvections of a given level elementary conjugates are 
connected by a triple identity that allows to express one of
them in terms of two other ones in different positions. It is 
reproduced in this precise form with a detailed proof in 
\cite{NZ4}, Lemma 12.

\begin{lemma}\label{l11}
Let $A\unlhd R$ be an ideal of an associative ring, $n\ge 3$, and
let $i,j,h$ be three pair-wise distinct indices. Then if a subgroup
$E(n,A)\le H\le\GL(n,R)$ contains 
\par\smallskip
$\bullet$ either $z_{ih}(a,c)$ and $z_{jh}(a,c)$,
\par\smallskip
$\bullet$ or $z_{hi}(a,c)$ and $z_{hj}(a,c)$,
\par\smallskip\noindent
for all $a\in A$, $c\in R$, then it also contains $z_{ij}(a,c)$ 
and $z_{ji}(a,c)$, for all such $a$ and $c$.
\end{lemma}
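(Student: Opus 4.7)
The plan is to reduce the claim about elementary conjugates $z$ to one about elementary commutators $y$, and then produce the required commutators from the hypothesised ones via the Hall--Witt identity together with Lemma~5. The tautology $z_{kl}(a,c) = {}^{t_{lk}(c)}t_{kl}(a) = y_{lk}(c,a)\cdot t_{kl}(a)$ together with $t_{kl}(a)\in E(n,A)\subseteq H$ shows that $z_{kl}(a,c)\in H$ is equivalent to $y_{lk}(c,a)\in H$, and by antisymmetry of the commutator also to $y_{kl}(a,c)\in H$. Thus the first-bullet hypothesis $z_{ih}(a,c),\ z_{jh}(a,c)\in H$ is equivalent to $y_{ih}(a,c),\ y_{jh}(a,c)\in H$ for all $a\in A$, $c\in R$, and the target $z_{ij}(a,c),\ z_{ji}(a,c)\in H$ is equivalent to $y_{ij}(a,c),\ y_{ji}(a,c)\in H$.

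To obtain $y_{ij}(a,c) = [t_{ij}(a),t_{ji}(c)]$, I substitute the Chevalley factorisation $t_{ij}(a) = [t_{ih}(a), t_{hj}(1)]$ and apply the Hall--Witt identity to the resulting triple commutator $[[t_{ih}(a), t_{hj}(1)], t_{ji}(c)]$. This expresses $y_{ij}(a,c)$, up to outer conjugation by one of the three involved transvections, as a product of the two nested commutators $[[t_{ji}(c), t_{ih}(a)], t_{hj}(1)]$ and $[[t_{hj}(1), t_{ji}(c)], t_{ih}(a)]$. The Chevalley formulas collapse the inner commutators to $t_{jh}(ca)$ and $t_{hi}(c)$, so these two factors become $y_{jh}(ca,1)$ and $y_{hi}(c,a)$, both of which lie in $H$: the former directly by the hypothesis (with $a'=ca\in A$, $c'=1\in R$), the latter as $y_{ih}(a,c)^{-1}$ by antisymmetry. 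Lemma~5 then handles the outer conjugations: each conjugator has index pair disjoint from the support of the elementary commutator on which it acts (supports $\{(j,h),(h,j)\}$, $\{(h,i),(i,h)\}$, and $\{(i,j),(j,i)\}$ respectively), so each conjugation preserves its argument modulo $E(n,R\circ A) = E(n,A)\subseteq H$. Hence $y_{ij}(a,c)\in H$, whence $z_{ij}(a,c)\in H$; swapping $i\leftrightarrow j$ throughout, using $t_{ji}(a) = [t_{jh}(a), t_{hi}(1)]$, yields $y_{ji}(a,c)\in H$ and thus $z_{ji}(a,c)\in H$.

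The second bullet is handled by the same scheme with the dual Chevalley factorisation $t_{ij}(a) = [t_{ih}(1), t_{hj}(a)]$. The inner commutators now produce $y_{jh}(c,a)$ and $y_{hi}(ac,1)$, both in $H$ by the second-bullet hypothesis together with antisymmetry. One of the outer conjugators is $t_{ih}(1)$, which is no longer in $H$, but Lemma~5 still guarantees that conjugation by $t_{ih}(1)$ preserves $y_{ij}(a,c)$ modulo $E(n,A)\subseteq H$, so the conclusion goes through unchanged. The main obstacle throughout is the bookkeeping: verifying at every step that the position of the outer conjugator is disjoint from the support of the elementary commutator it acts on, so that Lemma~5 applies and one never needs the conjugator itself to lie in $H$.
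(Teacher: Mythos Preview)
The paper does not actually prove this lemma: it attributes the observation to van der Kallen and refers to \cite{NZ4}, Lemma~12, for a detailed proof. So there is nothing to compare against directly. Judging from the proofs of the neighbouring Lemmas~6 and~7 (the analogues for elementary commutators), the intended argument is presumably a direct expansion: write $z_{ij}(a,c)={}^{t_{ji}(c)}[t_{ih}(a),t_{hj}(1)]=[t_{jh}(ca)t_{ih}(a),\,t_{hj}(1)t_{hi}(-c)]$, expand via multiplicativity into four factors, and check each lies in $H$.

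Your argument is correct and is a cleaner repackaging of the same computation. The Hall--Witt identity collapses the four factors to three, each an elementary commutator $y_{**}$, and the outer conjugations are absorbed using the congruence ${}^{t_{kl}(c)}y_{ij}(a,b)\equiv y_{ij}(a,b)\pamod{E(n,A\circ B)}$ for $(k,l)\neq(i,j),(j,i)$, applied with one of the two ideals equal to $R$ so that $A\circ R=A$. You verify the disjoint-support condition for all three conjugations, which is exactly what is needed; note in particular that this also disposes of the outer conjugation sitting on $y_{ij}(a,c)$ itself (by $t_{hj}(\pm 1)$ in the first bullet, by $t_{hj}(-a)$ in the second), so that ${}^{t_{hj}(\ast)}y_{ij}(a,c)\in H$ already implies $y_{ij}(a,c)\in H$.

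One correction of labelling: what you call ``Lemma~5'' is in fact Lemma~4 together with the congruence stated immediately after it; Lemma~5 is the statement you are proving. Apart from this, your proof stands.
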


In this section, and the next one, we establish its counterparts for elementary commutators in
$[E(n,A),E(n,B)]$.
The following result is {\it essentially\/} \cite{NZ1}, Lemma~5
or \cite{NZ3}, Lemma 11. Of course, there it was stated in a 
weaker form, as a congruence between elementary commutators modulo the relative elementary subgroup
$E(n,R,A\circ B)$, without specifying that we actually only need
elementary conjugates in {\it one\/} position. As a result, the
calculations in \cite{NZ1, NZ3} were not residing at the level
in elementaries, at the moment a factor from $E(n,R,A\circ B)$
occurred, it was immediately discarded. Here, we have to come 
up with a genuine calculation of all terms.

\begin{lemma}\label{l13}
Let $A, B\unlhd R$ be two-sided ideals of an associative ring, 
$n\ge 3$, and
let $i,j,h$ be three pair-wise distinct indices. Then if a subgroup
$$ E(n,A\circ B)\le H\le\GL(n,R) $$ 
\noindent
contains 
\par\smallskip
$\bullet$ either $y_{ih}(a,b)$ and $z_{jh}(ba,c)$,
\par\smallskip
$\bullet$ or $z_{hi}(ab,c)$ and $y_{hj}(a,b)$,
\par\smallskip\noindent
for all $a\in A$, $b\in B$, then it also contains $y_{ij}(a,b)$, 
for all such $a$ and $b$ and all $c\in R$.
\end{lemma}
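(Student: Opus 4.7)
Since the two cases in the hypothesis are exchanged (up to signs and a relabelling of the distinguished indices) by the anti-automorphism $g \mapsto (g^{-1})^{T}$ of $\GL(n, R)$, it suffices to treat Case 1, with Case 2 handled by a parallel argument. So assume $E(n, A \circ B) \le H \le \GL(n, R)$, and that $H$ contains $y_{ih}(a, b)$ and $z_{jh}(ba, c)$ for all $a \in A$, $b \in B$, $c \in R$. The goal is to show that $y_{ij}(a, b) \in H$ for every $a \in A$, $b \in B$. This refines \cite{NZ1}, Lemma~5 and \cite{NZ3}, Lemma~11, where the analogous conclusion was proved only modulo the \emph{relative} subgroup $E(n, R, A \circ B)$; here we must work at the finer level of the \emph{unrelativised} $E(n, A \circ B)$.

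First I repackage the second family of hypotheses. Writing $z_{jh}(ba, c) = t_{jh}(c)\,t_{hj}(ba)\,t_{jh}(-c)$ and observing that $t_{hj}(ba) \in E(n, BA) \subseteq E(n, A \circ B) \subseteq H$, a direct manipulation yields
$$
y_{jh}(c, ba) \;=\; [t_{jh}(c),\, t_{hj}(ba)] \;=\; z_{jh}(ba, c) \cdot t_{hj}(-ba) \;\in\; H.
$$
Thus, after absorbing an elementary transvection from $E(n, A\circ B)$, our hypothesis supplies two families of elementary commutators in $H$: the $y_{ih}(a,b)$ at position $(i,h)$, and the $y_{jh}(c, ba)$ at position $(j,h)$, both sharing the common third index $h$.

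The bulk of the proof is then the construction of $y_{ij}(a, b)$ from these two families via a careful commutator computation. A natural candidate is $[y_{ih}(a, b), y_{jh}(1, ba)]$, with both factors in $H$; expanding this using the Chevalley formula $[t_{ih}(x), t_{hj}(y)] = t_{ij}(xy)$ (which is what couples the indices $(i, h)$ and $(h, j)$ into $(i, j)$), together with the four commutator identities of Lemma~4, produces a $y_{ij}$-contribution at positions $(i, j), (j, i)$ along with corrections at other positions. The key point is the remark following Lemma~4: conjugating $y_{ih}(a, b)$ by $t_{kl}(c)$ with $(k, l) \ne (i, h), (h, i)$ (and similarly for $y_{jh}(1, ba)$ at $(j, h), (h, j)$) changes it only by an element of $E(n, A \circ B) \le H$, since the explicit formulas of Lemma~4 have all arguments in $AB + BA$. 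Iterating this and carefully collecting the resulting error terms into $E(n, A \circ B) \le H$, one obtains $y_{ij}(a, b)$ as a product of elements of $H$, up to a further correction in $E(n, A\circ B) \le H$.

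The main obstacle is that $E(n, A \circ B)$ is not normal in $\GL(n, R)$, so one cannot freely move correction terms around, and in particular cannot discard them into the larger relative $E(n, R, A \circ B)$ as in \cite{NZ1, NZ3, NZ4}. Every intermediate factor arising from the Chevalley formula or Lemma~4 must be tracked, argument by argument, to verify that it lands literally inside $AB + BA$, not merely in its normal closure. This accounting is essentially the content of the proof; it is manageable because each element being manipulated already carries both an $a \in A$ and a $b \in B$, so any further multiplication by an element of $R$ stays inside the two-sided ideal $A \circ B$ automatically, but the bookkeeping itself is what the proof really amounts to.
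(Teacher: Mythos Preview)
Your candidate $[y_{ih}(a,b),\,y_{jh}(1,ba)]$ cannot carry a genuine $y_{ij}(a,b)$ term. Since $ba\in A\circ B$, the element $y_{jh}(1,ba)=[t_{jh}(1),t_{hj}(ba)]$ already lies in $E(n,R,A\circ B)$, and hence by Lemma~9 the whole commutator $[y_{ih}(a,b),\,y_{jh}(1,ba)]$ sits inside $\big[[E(n,A),E(n,B)],E(n,R)\big]=E(n,R,A\circ B)$. But $y_{ij}(a,b)$ is in general \emph{not} in $E(n,R,A\circ B)$ --- that is precisely why the elementary commutators are needed on top of $E(n,R,A\circ B)$ in Theorem~B. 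So if you expand your candidate and collect the $E(n,A\circ B)$ corrections, what remains cannot be $y_{ij}(a,b)$; the commutator you chose has already thrown away the very level datum you are trying to recover. (There is also an index slip in the repackaging: $z_{jh}(ba,c)\cdot t_{hj}(-ba)=t_{hj}(c)t_{jh}(ba)t_{hj}(-c-ba)$, not $y_{jh}(c,ba)$; what one actually gets is $z_{jh}(ba,c)\cdot t_{jh}(-ba)=[t_{hj}(c),t_{jh}(ba)]=y_{hj}(c,ba)$.) Your last paragraph correctly identifies that the bookkeeping \emph{is} the proof, but the specific computation you outline does not produce the required identity.

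The paper goes in the opposite direction: it starts from $y_{ij}(a,b)$ itself and splits it open via the free index $h$. Writing $t_{ij}(-a)=[t_{ih}(a),t_{hj}(-1)]$ gives
\[
y_{ij}(a,b)=t_{ij}(a)\cdot{}^{t_{ji}(b)}[t_{ih}(a),t_{hj}(-1)]
=t_{ij}(a)\cdot\big[t_{jh}(ba)t_{ih}(a),\,t_{hj}(-1)t_{hi}(b)\big],
\]
and expanding by multiplicativity yields four factors. The first combines with the leading $t_{ij}(a)$ to give $t_{ih}(aba)\in E(n,A\circ B)$; the second is ${}^{t_{jh}(ba)t_{hj}(-1)}y_{ih}(a,b)$, equal to $y_{ih}(a,b)$ modulo $E(n,A\circ B)$ by Lemma~4; the third is $[t_{jh}(ba),t_{hj}(-1)]=t_{jh}(ba)\,z_{jh}(-ba,-1)\in H$; the fourth lands in $E(n,A\circ B)$ directly. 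The hypothesis elements $y_{ih}(a,b)$ and $z_{jh}(ba,c)$ arise here because the index $h$ is inserted into $t_{ij}(-a)$ \emph{before} it interacts with $b$, so the resulting factors are at level $A$, $B$, $BA$ separately rather than already at level $A\circ B$.
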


\begin{proof}
Take any $ h\neq i,j$ and rewrite the elementary commutator
$$ z=y_{ij}(a,b)=\big[t_{ij}(a),t_{ji}(b)\big] $$ 
\noindent
in the following form
$$ z=t_{ij}(a)\cdot{}^{t_{ji}(b)}t_{ij}(-a)=
t_{ij}(a)\cdot{}^{t_{ji}(b)}\big[t_{ih}(a),t_{hj}(-1)\big]. $$
\noindent
Expanding the conjugation by $t_{ji}(b)$, we see that 
$$ z=
t_{ij}(a)\cdot\big[{}^{t_{ji}(b)}t_{ih}(a),{}^{t_{ji}(b)}t_{hj}(-1)\big] = 
t_{ij}(a)\cdot\big[t_{jh}(ba)t_{ih}(a),t_{hj}(-1)t_{hi}(b)\big]. $$
\noindent
Using multiplicativity of the commutator w.r.t. the first argument 
we get
$$ z = t_{ij}(a)\cdot 
{}^{t_{jh}(ba)}\big[t_{ih}(a),t_{hj}(-1)t_{hi}(b)\big]\cdot
\big[t_{jh}(ba),t_{hj}(-1)t_{hi}(b)\big]. $$
\noindent
Using multiplicativity of the commutator w.r.t. the second argument,
\begin{multline*}
z = t_{ij}(a)\cdot 
{}^{t_{jh}(ba)}[t_{ih}(a),t_{hj}(-1)]\cdot 
{}^{t_{jh}(ba)t_{hj}(-1)}[t_{ih}(a),t_{hi}(b)]\cdot\\
[t_{jh}(ba),t_{hj}(-1)]\cdot 
{}^{t_{hj}(-1)}[t_{jh}(ba),t{hi}(b)] .
\end{multline*}
\noindent
Let us look at the factors separately.
\par\medskip
$\bullet$ 
The product of the first two factors equals 
$t_{ih}(aba)\in E(n,A\circ B)\le H$.
\par\medskip
$\bullet$ In the third factor one has $t_{ji}(ba)\in E(n,A\circ B)$, 
so that the corresponding conjugation can be discarded, whereas
$$ {}^{t_{hj}(-1)}y_{ih}(a,b)\equiv y_{ih}(a,b)
\pamod{E(n,A\circ B)} $$
\noindent
belongs to $H$.
\par\medskip
$\bullet$ The fourth factor equals $t_{jh}(ba)z_{jh}(-ba,-1)\in H$.
\par\medskip
$\bullet$ Finally, the last factor equals
$t_{ji}(bab)t_{hi}(-bab)\in E(n,A\circ B)\le H$.
\end{proof}


\section{Proof of Theorem 1}

But then, the calculation in the preceding section can be reversed, 
to express elementary conjugates as products of elementary 
commutators. This is accomplished in the following lemma. 
Again, {\it essentially\/} this lemma is based on the same calculation
that was used in level calculations to prove the leftmost inclusion in
Lemma 2, see, for instance, \cite{Hazrat_Vavilov_Zhang}, Lemma 1A. But it was never stated in this precise form.
This lemma immediately implies Theorem 1, but its full force will
be revealed in \S~6, where it will be used to establish much
more precise results.

\begin{lemma}\label{l12}
Let $A, B\unlhd R$ be two-sided ideals of an associative ring, 
$n\ge 3$, and
let $i,j,h$ be three pair-wise distinct indices. Then if a subgroup
$$ E(n,A\circ B)\le H\le\GL(n,R) $$ 
\noindent
contains 
\par\smallskip
$\bullet$ either $y_{ih}(a,b)$ and $y_{jh}(a,b)$,
\par\smallskip
$\bullet$ or $y_{hi}(b,a)$ and $y_{hj}(b,a)$,
\par\smallskip\noindent
for all $a\in A$, $b\in B$, then it also contains $z_{ij}(ab,c)$ 
for all such $a$ and $b$, and all $c\in R$.
Similarly for the elementary conjugates $z_{ij}(ba,c)$ with
$A$ and $B$ interchanged.
\end{lemma}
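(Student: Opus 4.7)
The plan is to reverse the expansion used in the proof of Lemma~6: rather than unpacking $y_{ij}(a,b)$ into $y_{ih}$ and $z_{jh}$ factors, I will realise $z_{ij}(ab,c)$ itself as the value of a single commutator whose expansion contains only the elementary commutators $y_{ih}$, $y_{jh}$ and elementary transvections of level $A\circ B$. Starting from the Chevalley commutator formula $t_{ij}(ab)=[t_{ih}(a),t_{hj}(b)]$ and conjugating by $t_{ji}(c)$, the key identity is
\[
z_{ij}(ab,c)=[u,v], \quad u:=t_{jh}(ca)\,t_{ih}(a)\in E(n,A),\quad v:=t_{hi}(-bc)\,t_{hj}(b)\in E(n,B),
\]
where the factorisations of $u$ and $v$ come from $[t_{ji}(c),t_{ih}(a)]=t_{jh}(ca)$ and $[t_{ji}(c),t_{hj}(b)]=t_{hi}(-bc)$, using only that $A$ and $B$ are two-sided.

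Next I would expand $[u,v]$ via the standard identities $[xy,z]={}^x[y,z]\cdot[x,z]$ and $[x,yz]=[x,y]\cdot{}^y[x,z]$. The four ``base'' commutators that appear are
\[
[t_{ih}(a),t_{hi}(-bc)]=y_{ih}(a,-bc),\quad [t_{jh}(ca),t_{hj}(b)]=y_{jh}(ca,b),
\]
\[
[t_{ih}(a),t_{hj}(b)]=t_{ij}(ab),\quad [t_{jh}(ca),t_{hi}(-bc)]=t_{ji}(-cabc),
\]
of which the first two lie in $H$ by hypothesis (since $-bc\in B$ and $ca\in A$), and the last two lie in $E(n,A\circ B)\le H$ because $ab,\,cabc\in A\circ B$.

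The remaining task, and the main bookkeeping obstacle, is to verify that the conjugating factors $t_{jh}(ca)$ and $t_{hi}(-bc)$ that decorate these pieces do not push anything out of $H$. For the two conjugated $y$'s this is immediate from the concluding remark to Lemma~4 on conjugation of elementary commutators: the conjugating positions $(j,h)$ and $(h,i)$ differ from the defining positions of the respective $y$'s, so the conjugation changes them only by a factor from $E(n,A\circ B)\le H$. For the conjugated transvection ${}^{t_{jh}(ca)\,t_{hi}(-bc)}t_{ij}(ab)$, one further Chevalley expansion decomposes it as
\[
y_{jh}(ca,-bcab)\cdot t_{hj}(-bcab)\cdot t_{ih}(-abca)\cdot t_{ij}(ab);
\]
since $A\circ B$ is a two-sided ideal containing both $ab$ and $bca\in BA$, the residual arguments $abca=a\cdot bca$ and $bcab=bc\cdot ab$ both belong to $A\circ B$, so the three transvection factors lie in $E(n,A\circ B)$, while $-bcab\in B$ places $y_{jh}(ca,-bcab)$ in $H$ by the hypothesis applied to the pair $(ca,-bcab)\in A\times B$. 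Finally, the second hypothesis of the lemma is equivalent to the first via the identities $y_{hi}(b,a)=y_{ih}(a,b)^{-1}$ and $y_{hj}(b,a)=y_{jh}(a,b)^{-1}$, while the companion assertion for $z_{ij}(ba,c)$ follows by interchanging $A$ and $B$ throughout.
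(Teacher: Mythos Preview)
Your argument is correct and follows essentially the same route as the paper: both start from the key identity $z_{ij}(ab,c)={}^{t_{ji}(c)}[t_{ih}(a),t_{hj}(b)]=[t_{jh}(ca)t_{ih}(a),\,t_{hj}(b)t_{hi}(-bc)]$ and then expand by multiplicativity into four pieces. The only difference is that you write the second factor as $t_{hi}(-bc)t_{hj}(b)$ rather than $t_{hj}(b)t_{hi}(-bc)$; with your ordering the two $y$-factors are conjugated by $t_{jh}(ca)$ and $t_{hi}(-bc)$, which are handled \emph{directly} by the remark after Lemma~4, and the only nontrivial piece is the conjugated transvection ${}^{t_{jh}(ca)t_{hi}(-bc)}t_{ij}(ab)$, whose expansion is short. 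In the paper's ordering the roles are swapped: the hard factor is ${}^{t_{jh}(ca)t_{hj}(b)}y_{ih}(a,-bc)$, whose analysis takes several more lines. So your variant is a mild streamlining of the same computation, not a different method.
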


\begin{proof}
Since the condition is symmetric with respect to $A$ and $B$,
and $y_{ij}(b,a)^{-1}=y_{ji}(a,b)\in H$ implies that 
$y_{ij}(b,a)\in H$, it suffices to consider one of the four 
occurring cases. Thus, we only have to verify that 
$z_{ij}(ab,c)\in H$ provided that $y_{ih}(a,b),y_{jh}(a,b)\in H$.
\par
Obviously, $t_{ij}(ab)=[t_{ih}(a),t_{hj}(b)]$. Decompose $z=z_{ij}(ab,c)$ accordingly:
$$ z=z_{ij}(ab,c)={}^{t_{ji}(c)}t_{ij}(ab)=
{}^{t_{ji}(c)}[t_{ih}(a),t_{hj}(b)]=
[t_{jh}(ca)t_{ih}(a),t_{hj}(b)t_{hi}(-bc)]. $$
\noindent 
Using multiplicativity of commutators w.r.t. the first argument
we see that
$$ z={}^{t_{jh}(ca)}[t_{ih}(a),t_{hj}(b)t_{hi}(-bc)]\cdot
[t_{jh}(ca), t_{hj}(b)t_{hi}(-bc)]. $$
\noindent
Expanding both factors in the above expression for $z$
w.r.t. the second argument, we get
\begin{multline*}
z={}^{t_{jh}(ca)}[t_{ih}(a),t_{hj}(b)]\cdot
{}^{t_{jh}(ca)t_{hj}(b)}[t_{ih}(a),t_{hi}(-bc)]\cdot \\
[t_{jh}(ca), t_{hj}(b)]\cdot
{}^{t_{hj}(b)}[t_{jh}(ca),t_{hi}(-bc)].
\end{multline*}
\noindent
Consider the four factors separately. Clearly,
\par\medskip
$\bullet$ ${}^{t_{jh}(ca)}[t_{ih}(a),t_{hj}(b)]=
{}^{t_{jh}(ca)}t_{ij}(ab)=t_{ij}(ab)t_{ih}(-abca)\in H$.
\par\medskip
$\bullet$ $[t_{jh}(ca),t_{hj}(b)]=y_{jh}(ca,b)\in H$.
\par\medskip
$\bullet$ ${}^{t_{hj}(b)}[t_{jh}(ca),t_{hi}(-bc)]=
{}^{t_{hj}(b)}t_{ji}(-cabc)=t_{hi}(-bcabc)t_{ji}(-cabc)\in H$.
\par\medskip\noindent
Thus, the only slightly problematic factor is the second one,
$$ w={}^{t_{jh}(ca)t_{hj}(b)}[t_{ih}(a),t_{hi}(-bc)]. $$
\par\medskip
$\bullet$ However, 
$$ w={}^{t_{jh}(ca)t_{hj}(b)}y_{ih}(a,-bc)=
{}^{t_{jh}(ca)}\Big([t_{hj}(b),y_{ih}(a,-bc)]\cdot y_{ih}(a,-bc)\Big). $$
\noindent
By Lemma 4 one has
$$ [t_{hj}(b),y_{ih}(a,-bc)]=t_{ij}(-abcab)t_{hj}(-bcac). $$
\noindent
Plugging this into the above expression for $w$ we get
\begin{multline*}
w={}^{t_{jh}(ca)}\Big(t_{ij}(-abcab)\cdot t_{hj}(-bcac)\cdot y_{ih}(a,-bc)\Big)= \\
{}^{t_{jh}(ca)}t_{ij}(-abcab)\cdot
{}^{t_{jh}(ca)}t_{hj}(-bcac)\cdot
{}^{t_{jh}(ca)}y_{ih}(a,-bc). 
\end{multline*}
\noindent
Clearly, the first two factors
\begin{align*}
&{}^{t_{jh}(ca)}t_{ij}(-abcab)=t_{ij}(-abcab)t_{ih}(abcabca),\\
\noalign{\vskip 3truept}
&{}^{t_{jh}(ca)}t_{hj}(-bcac)=[t_{jh}(ca),t_{hj}(-bcac)]\cdot
t_{hj}(-bcac)=y_{jh}(ca,-bcac)\cdot t_{hj}(-bcac),
\end{align*}
\noindent
both belong to $H$. On the other hand, using Lemma 4 once more
we get
\begin{multline*}
{}^{t_{jh}(ca)}y_{ih}(a,-bc)=
[t_{jh}(ca),y_{ih}(a,-bc)]\cdot y_{ih}(a,-bc)=\\
t_{ji}(cabcabc)\cdot t_{jh}(cabca-cabcabca)\cdot y_{ih}(a,-bc)\in H. 
\end{multline*}
\par
This means that the second factor in the above expression of
$z$ also belongs to $H$ and we are done.
\end{proof}


\section{Partially relativised subgroups}

For two ideals $A,B\unlhd R$ we denote by $E(n,B,A)$ 
the {\it partially relativised elementary group\/}, which is the 
smallest subgroup containing $E(n,A)$ and normalised by $E(n,B)$:
$$ E(n,B,A)=E(n,A)^{E(n,B)}. $$
\noindent
In particular, when $B=R$ we get the usual relative group
$E(n,R,A)$, as defined above. 
\par\smallskip\noindent
{\bf Remark.\ } In a special case the groups 
$E(n,B,A)$ were first systematically considered by Roozbeh 
Hazrat and the second author in their works on relative localisation
\cite{Hazrat_Zhang, Hazrat_Zhang_multiple}. It soon became clear
that  $E(n,R,s^mI)$ were too large to serve as a convenient
system of neighborhoods of 1, whereas $E(n,s^mI)$ were way
too small. The partially relativised groups $E(n,s^mR,s^mI)$ 
turned out to be a
much smarter choice. This important technical innovation then 
proved extremely useful in our joint
works with Roozbeh Hazrat, see, in particular, 
\cite{RNZ1, RNZ2, RNZ5}. Later, partially relativised elementary
groups figured prominently in the universal localisation by Alexei Stepanov \cite{Stepanov_nonabelian, Stepanov_universal, AS}.
\par\smallskip
The following obvious observation relates partially relativised
subgroups with double commutators of elementary subgroups.

\begin{lemma}\label{l7}
Let\/ $R$ be any associative ring with\/ $1$, and let\/ $A,B$ 
be two-sided ideals of\/ $R$. Then for any $n\ge 2$ one has
$$ E(n,B,A)=[E(n,A),E(n,B)]\cdot E(n,A). $$
\end{lemma}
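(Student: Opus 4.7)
The inclusion $[E(n,A), E(n,B)]\cdot E(n,A) \subseteq E(n,B,A)$ is the easy direction. I would verify it by noting that $E(n,A)\subseteq E(n,B,A)$ trivially, and for any $x\in E(n,A)$, $g\in E(n,B)$ the commutator $[x,g]=x\cdot (gx^{-1}g^{-1})$ is a product of $x\in E(n,A)\subseteq E(n,B,A)$ and the $E(n,B)$-conjugate $gx^{-1}g^{-1}\in E(n,A)^{E(n,B)}=E(n,B,A)$; since $E(n,B,A)$ is a subgroup, $[x,g]\in E(n,B,A)$, and so is any product of such commutators with elements of $E(n,A)$.

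For the reverse inclusion, I would set $H:=[E(n,A),E(n,B)]\cdot E(n,A)$ and prove that $H$ is a subgroup of $\GL(n,R)$ which is normalised by $E(n,B)$ and which contains $E(n,A)$. Since $E(n,B,A)$ is by definition the \emph{smallest} such subgroup, this gives $E(n,B,A)\subseteq H$ and closes the argument.

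The only step requiring actual content is that $H$ is a subgroup, and for this it is enough to show that $E(n,A)$ normalises $[E(n,A),E(n,B)]$. Here I would use the standard identity $[ax,g]={}^{a}[x,g]\cdot[a,g]$ (valid for any $a,x,g$), rewritten as ${}^{a}[x,g]=[ax,g]\cdot[a,g]^{-1}$: for $a,x\in E(n,A)$ and $g\in E(n,B)$, both factors on the right lie in $[E(n,A),E(n,B)]$, so ${}^{a}[x,g]$ does as well. Hence conjugation by elements of $E(n,A)$ preserves the generating commutators of $[E(n,A),E(n,B)]$. Once this is secured, closure of $H$ under multiplication follows from $(c_{1}x_{1})(c_{2}x_{2})=c_{1}(x_{1}c_{2}x_{1}^{-1})(x_{1}x_{2})\in H$ and closure under inversion from $(cx)^{-1}=(x^{-1}c^{-1}x)x^{-1}\in H$. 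The same commutator identity, applied on the other side, shows that $E(n,B)$ normalises $[E(n,A),E(n,B)]$; combined with $gxg^{-1}=[g,x]\cdot x\in H$ for $g\in E(n,B)$, $x\in E(n,A)$, this yields that $H$ is normalised by $E(n,B)$.

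I do not expect any genuine obstacle: the argument is entirely formal, uses no property of the ring $R$ or the degree $n\ge 2$ beyond the ability to manipulate group commutators, and in particular does not invoke Lemma 3 (which requires $n\ge 3$). This is consistent with the authors' description of the statement as an ``obvious observation''.
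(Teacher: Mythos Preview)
Your proposal is correct and follows essentially the same route as the paper: both directions are handled identically, with the key step being that $E(n,A)$ normalises $[E(n,A),E(n,B)]$ via the identity ${}^{a}[x,g]=[ax,g]\cdot[a,g]^{-1}$, and the reverse inclusion via ${}^{g}x=[g,x]\cdot x$. You spell out a few details the paper leaves implicit (closure of $H$ under products and inverses, and that $E(n,B)$ also normalises the commutator subgroup), but the argument is the same.
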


\begin{proof}
By the very definition $E(n,A)\le E(n,B,A)$. The mixed 
commutator subgroup $[E(n,A),E(n,B)]$ is generated 
by the commutators
$[x,y]=x(yx^{-1}y^{-1})$, where $x\in E(n,A)$, 
$y\in E(n,B)$. Thus, $[E(n,A),E(n,B)]\le E(n,B,A)$. This means
that the left hand side is contained in the right hand side.
\par
Conversely, observe that the product on the right hand side is
a subgroup. Indeed, ${}^x[y,z]=[xy,z]\cdot{[x,z]}^{-1}$, 
where $x,y\in E(n,A)$, $z\in E(n,B)$. It follows that 
$E(n,A)$ normalises $[E(n,A),E(n,B)]$. Finally, for all
such $x$ and $z$ one has
${}^zx=[z,x]\cdot x$, so that the right hand side 
contains all generators of $E(n,B,A)$.
\end{proof}

The following result is \cite{NZ4}, Theorem 2. There it was derived 
from Theorem B by another calculation in the style of level 
calculations, now, with Theorem 1 on deck, it becomes immediate.

\begin{The}\label{t2}
Let $R$ be an associative ring with identity $1$, $n\ge 3$,
and let $A$ and $B$ be two-sided ideals of $R$. Then
$$ E(n,B,A) = \big\langle z_{ij}(a,b),\ 1\le i\neq j\le n, 
a\in A, b\in B \big\rangle. $$
\end{The}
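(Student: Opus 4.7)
The plan is to prove mutual containment, writing $H$ for the right-hand side. The inclusion $H\le E(n,B,A)$ is essentially a tautology: each generator $z_{ij}(a,b)=t_{ji}(b)t_{ij}(a)t_{ji}(-b)$ is the conjugate of $t_{ij}(a)\in E(n,A)$ by $t_{ji}(b)\in E(n,B)$, and $E(n,B,A)=E(n,A)^{E(n,B)}$ is by definition stable under such conjugations and contains $E(n,A)$. So this direction requires no real work.

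For the reverse inclusion I would invoke Lemma~7, which gives $E(n,B,A)=[E(n,A),E(n,B)]\cdot E(n,A)$. Since $0\in B$ and $z_{ij}(a,0)=t_{ij}(a)$, the unrelative factor $E(n,A)$ sits inside $H$ immediately, and the whole problem collapses to checking that the mixed commutator subgroup $[E(n,A),E(n,B)]$ is contained in $H$.

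At this point Theorem~1 does the heavy lifting: it suffices to show that every elementary commutator $[t_{ij}(a),t_{hk}(b)]$ lies in $H$, and I would split by the intersection of the index pairs $\{i,j\}$ and $\{h,k\}$. When the pairs are disjoint, when they coincide, or when they share exactly one index in a configuration where both $e_{ij}e_{hk}$ and $e_{hk}e_{ij}$ vanish, the two transvections commute and the commutator is trivial. When $(h,k)=(j,i)$ the commutator equals $y_{ij}(a,b)$, which the single identity
\[ y_{ij}(a,b)=t_{ij}(a)\cdot z_{ij}(-a,b) \]
exhibits as a product of two generators of $H$. The two remaining shared-index configurations produce a single transvection: $[t_{ij}(a),t_{jk}(b)]=t_{ik}(ab)$ and $[t_{ij}(a),t_{hi}(b)]=t_{hj}(-ba)$, with $ab,\,-ba\in A$ since $A$ is a two-sided ideal, so each is of the form $z_{pq}(r,0)\in H$.

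I do not foresee any genuine obstacle beyond the routine case-checking above; once Theorem~1 and Lemma~7 are in place, the argument rests entirely on the one-line identity expressing $y_{ij}(a,b)$ as a product of two $z$'s, together with the systematic use of the degenerate generators $z_{pq}(r,0)=t_{pq}(r)$ to absorb both the unrelative subgroup $E(n,A)$ and the single-transvection commutators. This is precisely the simplification advertised just before the statement: with Theorem~1 on deck, what used to require a level calculation in the style of \cite{NZ4} becomes a purely formal manipulation.
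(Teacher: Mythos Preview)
Your proof is correct and follows essentially the same route as the paper: both directions use the decomposition $E(n,B,A)=[E(n,A),E(n,B)]\cdot E(n,A)$ (this is Lemma~8 in the paper, not Lemma~7), absorb $E(n,A)$ into $H$ via $z_{ij}(a,0)=t_{ij}(a)$, appeal to Theorem~1 for generators of the mixed commutator, and finish with the identity $y_{ij}(a,b)=z_{ij}(a,0)\,z_{ij}(-a,b)$. Your case analysis of the non-$y$ commutators $[t_{ij}(a),t_{hk}(b)]$ is slightly more explicit than the paper's, which folds them into $E(n,A\circ B)\le E(n,A)\le H$ in one line.
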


\begin{proof}
Consider the subgroup 
$$ H=\big\langle z_{ij}(a,b),\
1\le i\neq j\le n, a\in A, b\in B\big\rangle, $$
\noindent
generated by the elementary conjugates contained in
$E(\Phi,B,A)$. By the very definition, $z_{ij}(a,b)\in E(n,B,A)$
so that $H\le E(n,B,A)$.
\par
To prove the converse inclusion, it suffices to verify that 
the generators of $E(n,B,A)$ listed in Lemma 7 are in 
fact contained already in $H$.
\par\smallskip
$\bullet$ By definition, any $x\in E(n,A)$ is a product of 
the elementary generators $x_{ij}(a)=z_{ij}(a,0)$. In
other words, $E(n,A)\le H$.
\par\smallskip
$\bullet$ By Theorem~\ref{t1} modulo $E(n,A\circ B)\le E(n,A)$
the mixed commutator subgroup $[E(n,A),E(n,B)]$ is generated 
by the elementary commutators $[t_{ij}(a),t_{hk}(b)]$, where 
$1\le i\neq j\le n$, $1\le h\neq k\le n$, $a\in A$ and $b\in B$.
However, 
$$ y_{ij}(a,b)=[x_{ij}(a),x_{ji}(b)]=x_{ij}(a)\cdot 
{}^{x_{ji}(b)}x_{ij}(-a)=z_{ij}(a,0)z_{ij}(-a,b)\in H, $$
\noindent
which finishes the proof. 
\end{proof}


\section{Elementary commutators modulo $E(n,A\circ B)$}

In \cite{NZ1, NZ3} Lemma 4 was used to establish that 
the quotient
$$ \big[E(n,R,A),E(n,R,B)\big]/E(n,R,A\circ B)=
\big[E(n,A),E(n,B)\big]/E(n,R,A\circ B) $$ 
\noindent
is central in $E(n,R)/E(n,R,A\circ B)$. 

\begin{lemma}
Let $R$ be an associative ring with $1$, $n\ge 3$, and let $A,B$ 
be two-sided ideals of $R$. Then
$$ \big[\big[E(n,A),E(n,B)\big],E(n,R)\big]=E(n,R,A\circ B). $$
\end{lemma}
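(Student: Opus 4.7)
The plan is to establish the two inclusions separately. The inclusion $[[E(n,A),E(n,B)],E(n,R)]\subseteq E(n,R,A\circ B)$ is essentially already in hand: as observed in the paragraph immediately preceding the statement, the results of \cite{NZ1, NZ3} (via Lemma~4) establish that the quotient $[E(n,A),E(n,B)]/E(n,R,A\circ B)$ is central in $E(n,R)/E(n,R,A\circ B)$, and this centrality is precisely the assertion that every commutator $[x,g]$ with $x\in[E(n,A),E(n,B)]$ and $g\in E(n,R)$ lies in $E(n,R,A\circ B)$.

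For the reverse inclusion, set $H:=[[E(n,A),E(n,B)],E(n,R)]$. The first observation is that $H$ is normal in $E(n,R)$: the subgroup $[E(n,A),E(n,B)]$ is already known to be normal in $E(n,R)$ (noted right after Lemma~3), and $[N,G]$ is normal in $G$ whenever $N$ is. It therefore suffices to place every elementary generator of the unrelative subgroup $E(n,A\circ B)$ inside $H$, for then $E(n,R,A\circ B)=E(n,A\circ B)^{E(n,R)}\le H$ by normality of $H$.

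To that end, fix $a\in A$, $b\in B$ and distinct indices $p\neq r$. Since $n\ge 3$, I can choose an auxiliary index $q$ distinct from both $p$ and $r$, and then a further index $h\neq p,q$. The Chevalley commutator formula gives $t_{pq}(ab)=[t_{ph}(a),t_{hq}(b)]\in[E(n,A),E(n,B)]$; commuting this with $t_{qr}(1)\in E(n,R)$ yields
$$[t_{pq}(ab),t_{qr}(1)]=t_{pr}(ab),$$
realising $t_{pr}(ab)$ as an element of $H$. Running the same computation with the roles of $A$ and $B$ swapped gives $t_{pr}(ba)\in H$. Since $A\circ B=AB+BA$, the transvections $t_{pr}(ab)$ and $t_{pr}(ba)$ generate $E(n,A\circ B)$, so $E(n,A\circ B)\le H$, which by the previous paragraph completes the proof.

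The whole argument is a short bootstrap on material already present in the excerpt: centrality of the quotient does the harder inclusion, and the only genuine new observation is that the triple commutator $[[t_{ph}(a),t_{hq}(b)],t_{qr}(1)]$ collapses to the single transvection $t_{pr}(ab)$. I do not anticipate any real obstacle; the only mild technicality is ensuring that the auxiliary indices $q$ and $h$ can be chosen in the extreme case $n=3$, which they always can.
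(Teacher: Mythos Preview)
Your argument is correct and matches the paper's treatment. The paper does not give a self-contained proof of this lemma; it records it as a consequence of the centrality assertion established in \cite{NZ1, NZ3} via Lemma~4, exactly as you invoke it for the inclusion $\le$. For the reverse inclusion the paper says nothing further here (it is implicit in the level computations of Lemma~2 and its proof in the cited papers), and the short direct argument you supply --- realising $t_{pr}(ab)$ as $[[t_{ph}(a),t_{hq}(b)],t_{qr}(1)]$ and then taking the $E(n,R)$-normal closure --- is precisely the standard level calculation. One cosmetic remark: the normality of $H=[[E(n,A),E(n,B)],E(n,R)]$ in $E(n,R)$ does not actually require the normality of $[E(n,A),E(n,B)]$, since $[N,G]\trianglelefteq\langle N,G\rangle$ for arbitrary subgroups; but your justification is of course also valid.
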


In particular, $\big[E(n,A),E(n,B)\big]/E(n,R,A\circ B)$ is
itself abelian. In turn, Lemma~9 was itself a key step in the proof 
of the following more general result on triple commutators.
The following result is \cite{NZ3}, Lemma 7.

\begin{lemma}
Let $R$ be an associative ring with $1$, $n\ge 3$, and let $A,B,C$ 
be three two-sided ideals of $R$. Then
$$ \big[\big[E(n,A),E(n,B)\big],E(n,C)\big]=
\big[E(n,A\circ B),E(n,C)\big]. $$
\end{lemma}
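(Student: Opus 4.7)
The plan is to prove both inclusions. The inclusion $[E(n,A\circ B),E(n,C)]\subseteq[[E(n,A),E(n,B)],E(n,C)]$ is immediate from Lemma~2 via the chain $E(n,A\circ B)\le E(n,R,A\circ B)\le[E(n,A),E(n,B)]$ and monotonicity of the commutator.

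For the non-trivial inclusion, the strategy is to invoke Theorem~\ref{t1} and describe $[E(n,A),E(n,B)]$ as a group generated by the elementary commutators $[t_{ij}(a),t_{hk}(b)]$. By the standard Chevalley commutator formulas each such generator is either trivial, an elementary transvection $t_{ik}(ab)$ or $t_{hj}(-ba)$ sitting in $E(n,A\circ B)$, or an elementary commutator $y_{ij}(a,b)$. Combining the commutator expansions $[xy,z]={}^x[y,z]\cdot[x,z]$ and $[x,yz]=[x,y]\cdot{}^y[x,z]$ with the normality of $[E(n,A\circ B),E(n,C)]=[E(n,R,A\circ B),E(n,R,C)]$ in $E(n,R)$ (which follows from Lemma~3 and the general fact that the commutator of two normal subgroups of $E(n,R)$ is normal in $E(n,R)$), I would reduce the problem to verifying $[g,t_{pq}(c)]\in[E(n,A\circ B),E(n,C)]$ for every generator $g$ of $[E(n,A),E(n,B)]$ and every elementary transvection $t_{pq}(c)\in E(n,C)$. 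Transvection-type generators already sit in $E(n,A\circ B)$, so there is nothing to check. For $g=y_{ij}(a,b)$ with $(p,q)\notin\{(i,j),(j,i)\}$, the commutator $[t_{pq}(c),y_{ij}(a,b)]$ is either trivial (when $\{p,q\}\cap\{i,j\}=\emptyset$) or given by Lemma~4 as an explicit product of two elementary transvections with entries in $(A\circ B)\circ C$; in either subcase the result lies in $E(n,R,(A\circ B)\circ C)\le[E(n,A\circ B),E(n,C)]$ by Lemma~2 applied to the pair of ideals $A\circ B$ and $C$.

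The main obstacle is the residual case $(p,q)\in\{(i,j),(j,i)\}$, which is precisely the \emph{bad} case flagged in the remark after Lemma~4, where $y_{ij}(a,b)$ does not commute with $t_{pq}(c)$ modulo $E(n,A\circ B)$. My plan is to pick a third index $h\ne i,j$ (possible since $n\ge 3$) and exploit the explicit rewriting carried out inside the proof of Lemma~\ref{l13}: that computation shows $y_{ij}(a,b)=u\cdot y_{ih}(a,b)$ for some $u\in E(n,R,A\circ B)$, because every factor in the resulting product other than $y_{ih}(a,b)$ itself is either a transvection in $E(n,A\circ B)$ or the elementary conjugate $z_{jh}(-ba,-1)\in E(n,R,A\circ B)$. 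The expansion $[y_{ij}(a,b),t_{ij}(c)]={}^u[y_{ih}(a,b),t_{ij}(c)]\cdot[u,t_{ij}(c)]$ then lands entirely in $[E(n,A\circ B),E(n,C)]$: the first factor falls under the previous (easy) case applied to $y_{ih}(a,b)$, since $(i,j)\notin\{(i,h),(h,i)\}$; the second sits in $[E(n,R,A\circ B),E(n,C)]=[E(n,A\circ B),E(n,C)]$ by Lemma~3; and the outer conjugation by $u\in E(n,R)$ is absorbed by normality. The symmetric case $(p,q)=(j,i)$ is handled analogously via the second bullet of Lemma~\ref{l13}, which gives $y_{ij}(a,b)\equiv y_{hj}(a,b)\pamod{E(n,R,A\circ B)}$.
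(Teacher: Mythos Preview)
Your argument is correct. The paper does not actually prove this lemma; it merely records it as \cite{NZ3}, Lemma~7, and remarks that in \cite{NZ3} Lemma~9 (the case $C=R$) served as a key step toward it. So your proof is necessarily a different route: you invoke Theorem~\ref{t1}, the main novelty of the present paper, to generate $[E(n,A),E(n,B)]$ by elementary commutators and then reduce everything to Lemma~4 and the rewriting inside the proof of Lemma~\ref{l13}. The original proof in \cite{NZ1,NZ3} could not do this, since Theorem~\ref{t1} was unavailable there; instead it worked with the larger generating set of Theorem~B (elementary conjugates $z_{ij}(ab,c)$, $z_{ij}(ba,c)$ together with $y_{ij}(a,b)$) and first established the centrality statement of Lemma~9. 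Your approach is cleaner precisely because Theorem~\ref{t1} has already absorbed the hard work of eliminating the conjugates.

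Two small remarks. First, your treatment of the residual case $(p,q)=(j,i)$ via the second bullet of Lemma~\ref{l13} is not needed: the \emph{same} decomposition $y_{ij}(a,b)=u\cdot y_{ih}(a,b)$ with $u\in E(n,R,A\circ B)$ already disposes of $(j,i)$, since $(j,i)\notin\{(i,h),(h,i)\}$ and hence $[y_{ih}(a,b),t_{ji}(c)]$ is again covered by Lemma~4. Second, you should keep in mind that using Theorem~\ref{t1} here is logically fine within this paper (Theorem~\ref{t1} is proved in \S3 via Lemma~7 and does not rely on Lemma~10), but it would be circular as a replacement proof inside \cite{NZ3} itself.
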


In \cite{NZ3} we succeeded in proving an analogue of Lemma 10 for
quadruple commutators only under the stronger assumption
that $n\ge 4$, see \cite{NZ3}, Lemma 8. However, it follows
from the standard commutator formula that over {\it quasi-finite\/}
rings the claim holds also for $n=3$, see 
\cite{Hazrat_Zhang_multiple, Hazrat_Vavilov_Zhang}.

\begin{lemma}
Assume that either $R$ is an arbitrary associative ring with $1$ 
and $n\ge 4$, or $n=3$ and $R$ is quasi-finite. Further, let $A,B,C,D$ 
be four two-sided ideals of $R$. Then
$$ \big[\big[E(n,A),E(n,B)\big],\big[E(n,C),E(n,D)\big]\big]=
\big[E(n,A\circ B),E(n,C\circ D)\big]. $$
\end{lemma}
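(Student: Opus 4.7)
The plan is to prove both inclusions of the equality. The easy direction $[E(n,A\circ B),E(n,C\circ D)]\le\big[[E(n,A),E(n,B)],[E(n,C),E(n,D)]\big]$ follows at once from Lemma 2: one has $E(n,A\circ B)\le E(n,R,A\circ B)\le[E(n,A),E(n,B)]$ and likewise $E(n,C\circ D)\le[E(n,C),E(n,D)]$, so monotonicity of the commutator bracket suffices.

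For the reverse inclusion I would split along the two clauses of the hypothesis. When $n\ge 4$ and $R$ is arbitrary, the assertion is precisely \cite{NZ3}, Lemma~8, and nothing more need be said. For $n=3$ with $R$ quasi-finite I would invoke the standard commutator formula from \cite{Hazrat_Zhang_multiple, Hazrat_Vavilov_Zhang} in its birelative form
\[\big[\GL(n,R,I),\GL(n,R,J)\big]=\big[E(n,R,I),E(n,R,J)\big],\]
valid for any two-sided ideals $I,J\unlhd R$. Specialising to $I=A\circ B$ and $J=C\circ D$ and rewriting the right-hand side via Lemma 3 as $[E(n,A\circ B),E(n,C\circ D)]$ gives the identity $[\GL(n,R,A\circ B),\GL(n,R,C\circ D)]=[E(n,A\circ B),E(n,C\circ D)]$. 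Finally, by Lemma 2 one has $[E(n,A),E(n,B)]\le\GL(n,R,A\circ B)$ and $[E(n,C),E(n,D)]\le\GL(n,R,C\circ D)$, whence their mixed commutator is contained in the left-hand side of the preceding displayed identity and so also in its right-hand side, which is the asserted bound.

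The substantive work sits entirely in the cited results rather than in the reorganisation above. For $n\ge 4$ it lies in the four-index elementary-commutator manipulations of \cite{NZ3}, Lemma~8, which genuinely use an index outside the support of the commutators in question and therefore do not transfer to $n=3$. For $n=3$ the quasi-finiteness hypothesis is essential because the $\GL$--$\GL$ form of the standard commutator formula at $n=3$ over arbitrary associative rings is not known; this is also why the finer triple-commutator identity of Lemma 10 cannot be iterated directly here, since its outer arguments are elementary subgroups rather than mixed commutator subgroups, so the passage through the ambient congruence subgroups is forced on us.
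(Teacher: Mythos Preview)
Your proposal is correct and matches the paper's own treatment: the paper does not give a separate proof of this lemma but explains just before its statement that the case $n\ge 4$ is \cite{NZ3}, Lemma~8, while the case $n=3$ over quasi-finite rings ``follows from the standard commutator formula'' as in \cite{Hazrat_Zhang_multiple, Hazrat_Vavilov_Zhang}. Your write-up simply spells out that sketch, with the easy inclusion via Lemma~2 and the quasi-finite reduction via the birelative formula combined with Lemmas~2 and~3.
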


When writing \cite{NZ1, NZ3} and even \cite{NZ4} 
we failed to notice the following generalisation of Lemma 9.

\begin{The}\label{t3}
Let $R$ be an associative ring with identity $1$, $n\ge 3$,
and let $A$ and $B$ be two-sided ideals of $R$. Then
$$ E(n,A\circ B)\unlhd \big[E(n,A),E(n,B)\big] $$
\noindent
and the quotient $E(n,R,A\circ B)/E(n,A\circ B)$ is
central in 
$$ \big[E(n,A),E(n,B)\big]/E(n,A\circ B). $$
\noindent
If, moreover, either $n\ge 4$, or $R$ is quasi-finite, then
this last quotient is itself abelian. 
\end{The}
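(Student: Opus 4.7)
The plan is to establish the three assertions sequentially, with Part~1 being the crux.

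For Part~1, apply Theorem~1 to generate $[E(n,A),E(n,B)]$ by elementary commutators $[t_{ij}(a),t_{hk}(b)]$. The Chevalley commutator formula classifies each such generator as either trivial, a single transvection $t_{ik}(ab)$ or $t_{hj}(-ba)$ whose entry lies in $AB+BA=A\circ B$ (and so is already in $E(n,A\circ B)$, which of course normalises itself), or the "on-block" element $y_{ij}(a,b)$ that occurs when the positions are $(i,j)$ and $(j,i)$. Normality thus reduces to checking that every $y_{ij}(a,b)$ normalises $E(n,A\circ B)$. For a generator $t_{pq}(c)$ of $E(n,A\circ B)$ with $c\in A\circ B$ and $(p,q)\notin\{(i,j),(j,i)\}$, the displayed congruence following Lemma~4 gives $[y_{ij}(a,b),t_{pq}(c)]\in E(n,A\circ B)$ directly, whence $y_{ij}(a,b)\,t_{pq}(c)\,y_{ij}(a,b)^{-1}\in E(n,A\circ B)$.

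The main obstacle will be the two exceptional pairs $(p,q)=(i,j),(j,i)$, which Lemma~4 does not address. Here we exploit $n\ge 3$: for $(p,q)=(i,j)$, pick a third index $h$ and, writing $c\in A\circ B$ as a sum of elementary products $\alpha\beta$ and $\beta'\alpha'$ with $\alpha,\alpha'\in A$, $\beta,\beta'\in B$, use the Chevalley identity $t_{ij}(\alpha\beta)=[t_{ih}(\alpha),t_{hj}(\beta)]$ (and symmetrically for $\beta\alpha$) to rewrite
\[
y_{ij}(a,b)\,t_{ij}(\alpha\beta)\,y_{ij}(a,b)^{-1}=\bigl[{}^{y_{ij}(a,b)}t_{ih}(\alpha),\;{}^{y_{ij}(a,b)}t_{hj}(\beta)\bigr].
\]
The formulas in Lemma~4 express each conjugate inside the bracket as the original transvection multiplied by a factor in $E(n,A\circ B)$. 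A commutator expansion in the style of the calculation in the proof of Lemma~7 then reduces the outer bracket to a product each of whose terms lies in $E(n,A\circ B)$. The case $(p,q)=(j,i)$ is handled symmetrically.

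For Part~2 we use the identity $z_{ij}(c,d)=y_{ij}(c,d)^{-1}\,t_{ij}(c)$, which gives $z_{ij}(c,d)\equiv y_{ij}(c,d)^{-1}\pamod{E(n,A\circ B)}$ whenever $c\in A\circ B$. Consequently $E(n,R,A\circ B)/E(n,A\circ B)$ is generated by the images of $y_{ij}(c,d)$ with $c\in A\circ B$, $d\in R$, while Theorem~1 shows that $[E(n,A),E(n,B)]/E(n,A\circ B)$ is generated by the images of $y_{pq}(a,b)$ with $a\in A$, $b\in B$. Centrality thus amounts to $[y_{ij}(c,d),y_{pq}(a,b)]\in E(n,A\circ B)$, which follows by a case analysis entirely parallel to Part~1, with the far-position cases covered by Lemma~4 (applied to one of the two factors) and the on-block cases by the same decomposition trick.

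Finally, Part~3 is immediate from Part~1 together with Lemma~11: under the hypothesis $n\ge 4$ or $R$ quasi-finite one has
\[
\bigl[[E(n,A),E(n,B)],[E(n,A),E(n,B)]\bigr]=[E(n,A\circ B),E(n,A\circ B)]\subseteq E(n,A\circ B),
\]
so the quotient $[E(n,A),E(n,B)]/E(n,A\circ B)$ is abelian.
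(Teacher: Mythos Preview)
Your treatment of Part~3 via Lemma~11 matches the paper exactly. For Parts~1 and~2, however, the paper takes a completely different route: it simply invokes Lemma~10 (the triple commutator formula $\big[[E(n,A),E(n,B)],E(n,C)\big]=\big[E(n,A\circ B),E(n,C)\big]$) with $C=A\circ B$, obtaining
\[
\big[[E(n,A),E(n,B)],E(n,R,A\circ B)\big]=\big[E(n,A\circ B),E(n,A\circ B)\big]\le E(n,A\circ B),
\]
which yields both normality and centrality in one stroke. Your approach, by contrast, attempts to bypass Lemma~10 and argue directly from Theorem~1 and Lemma~4.

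There is a genuine gap in your direct argument. In the on-block case of Part~1 you write ${}^{y_{ij}(a,b)}t_{ih}(\alpha)=t_{ih}(\alpha)\cdot g_1$ and ${}^{y_{ij}(a,b)}t_{hj}(\beta)=g_2\cdot t_{hj}(\beta)$ with $g_1,g_2\in E(n,A\circ B)$, and then expand the outer commutator $[t_{ih}(\alpha)g_1,\,g_2 t_{hj}(\beta)]$. But this expansion produces factors such as $[t_{ih}(\alpha),g_2]$, and since $g_2$ contains a factor $t_{hi}(-\beta bab)$, you pick up a term $y_{ih}(\alpha,-\beta bab)$ with $\alpha\in A$ and $-\beta bab\in A\circ B$. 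This elementary commutator lies in $\GL(n,R,A\circ B)$ but there is no reason for it to lie in $E(n,A\circ B)$; showing that it does is essentially the content of Lemma~10 itself. The same difficulty infects Part~2: when you expand $[y_{ij}(c,d),y_{pq}(a,b)]$ using $y_{pq}(a,b)=t_{pq}(a)t_{qp}(b)t_{pq}(-a)t_{qp}(-b)$, the intermediate conjugations are by transvections such as $t_{pq}(a)$ with $a\in A$ (or $t_{ji}(d)$ with $d\in R$), and these do \emph{not} normalise $E(n,A\circ B)$; only elements of $[E(n,A),E(n,B)]$ are known to do so, and that only after Part~1. So the ``case analysis entirely parallel to Part~1'' does not close.

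In short, the hands-on route you sketch is not wrong in spirit, but making it work amounts to reproving the special case of Lemma~10 you need, whereas the paper simply cites Lemma~10 (established in~\cite{NZ3}) and is done in two lines.
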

\begin{proof}
By Lemma 10 one has
$$ \big[\big[E(n,A),E(n,B)\big],E(n,R,A\circ B)\big]
=\big[E(n,A\circ B),E(n,A\circ B)\big]
\le E(n,A\circ B), $$
\noindent
which establishes the first two claims of lemma. To check the
last claim, observe that under these additional assumptions
one has
$$ \big[\big[E(n,A),E(n,B)\big],\big[E(n,A),E(n,B)\big]\big]
=\big[E(n,A\circ B),E(n,A\circ B)\big]
\le E(n,A\circ B), $$
\noindent
now by Lemma 11.
\end{proof}


This means that instead of studying the quotient 
$[E(n,A),E(n,B)]/E(n,R,A\circ B)$, as we did in \cite{NZ1,
NZ3, NZ4}, we can now study the larger quotient
$$ [E(n,A),E(n,B)\big]/E(n,A\circ B). $$
\noindent 
In \cite{NZ1, NZ3, NZ4} we retrieved some of the relations 
among the elementary commutators $y_{ij}(a,b)$ modulo 
the relative elementary subgroup $E(n,R,A\circ B)$. However, 
a naive attempt to generalise these relations by eliminating the elementary 
conjugates at the cost of introducing further elementary
commutators leads to complicated and unsavoury relations.


\section{Further reducing the generating set of $[E(n,A),E(n,B)]$}

Let us state a result which is {\it essentially\/} due to Wilberd van 
der Kallen and Alexei Stepanov. Namely, in \cite{vdK-group}, 
Lemma 2.2 this result is established for the unipotent radical
of a {\it terminal\/} parabolic in $\GL(n,R)$. Subsequently, it
was generalised to all maximal parabolics in Chevalley groups
in \cite{Stepanov_calculus, Stepanov_nonabelian, 
Stepanov_universal}, but of course, in these papers $R$ was 
always assumed to be commutative. In that form, it is stated 
as corollary to \cite{NZ3}, Theorem 3 (of course, it immediately 
follows already from Lemma 5 above = \cite{NZ3}, Lemma 12).
Morally, it is a trickier and mightier version of the classical 
generation result for $E(n,R,A)$, Lemma 1, with a smaller 
set of generators. Actually, one does not even need the whole 
unipotent radical, just $n-1$ roots in interlaced positions that 
allow to repeatedly apply Lemma 5.

\begin{oldtheorem}
Let $A\unlhd R$ be a two-sided ideal of an associative ring, 
$n\ge 3$ and let $1\le r\le n-1$. Then the relative elementary 
subgroup $E(n,R,A)$ is generated by the true elementary 
subgroup $E(n,A)$ and the elementary conjugates 
$z_{ij}(a,c)$, where $a\in A$, $c\in R$, whereas $(i,j)$
is one of the following\/{\rm:}
\par\smallskip
$\bullet$ Either $(i,h)$, for all $1\le i\le r$ and a fixed
$r+1\le h\le n$,
\par\smallskip
$\bullet$ or $(k,j)$, for a fixed $1\le k\le r$ and all 
 $r+1\le j\le n$.
\end{oldtheorem}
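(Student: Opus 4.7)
The plan is to let $H$ be the subgroup of $\GL(n,R)$ generated by $E(n,A)$ together with the $n-1$ specified elementary conjugates, to note that $H\le E(n,R,A)$ is immediate from the definitions, and to deduce the reverse inclusion from Lemma 1 by exhibiting every $z_{ij}(a,c)$, $1\le i\ne j\le n$, $a\in A$, $c\in R$, as an element of $H$. The entire content of the proof is then a combinatorial iteration of Lemma 5 (the van der Kallen--Stepanov rolling lemma) over the L-shape of the $n-1$ initial positions, keeping careful track of which pair $(i,j)$ has been produced at each stage.

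Write $I_1=\{1,\ldots,r\}$ and $I_2=\{r+1,\ldots,n\}$, so that by hypothesis $H$ contains $z_{ih}(a,c)$ for every $i\in I_1$ and $z_{kj}(a,c)$ for every $j\in I_2$, with $h\in I_2$ and $k\in I_1$ fixed. First, I would apply variant (a) of Lemma 5 to pairs of column-$h$ generators: for $i\ne i'$ in $I_1$ the pair $z_{ih}, z_{i'h}$ shares column $h$, so $z_{ii'}$ and $z_{i'i}$ land in $H$. Dually, variant (b) applied to the row-$k$ generators produces $z_{jj'}\in H$ for all $j\ne j'$ in $I_2$. Hence $H$ already contains every $z$ whose two indices lie in the same block.

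Next, for the mixed positions $(i,j)\in I_1\times I_2$: the case $j=h$ is initial, and for $j\in I_2\setminus\{h\}$ the element $z_{jh}$ is now available, so variant (a) applied to $z_{ih}$ and $z_{jh}$ deposits both $z_{ij}$ and, crucially, $z_{ji}$ in $H$. This finishes every $(i,j)\in I_1\times I_2$ and every $(j,i)$ with $j\in I_2\setminus\{h\}$, leaving only the positions $(h,i)$ with $i\in I_1$. These are produced from variant (b) applied to $z_{kh}$ (initial) and $z_{ki}$ (from the first step) when $i\ne k$, while the residual $(h,k)$ is obtained from variant (a) along any column $a\in I_2\setminus\{h\}$, using $z_{ka}$ (initial) and $z_{ha}$ (produced above), when $|I_2|\ge 2$, or from variant (b) along any row $i'\in I_1\setminus\{k\}$, using $z_{i'h}$ and $z_{i'k}$, in the terminal case $r=n-1$. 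I expect the only real obstacle to be the bookkeeping in the terminal cases $r\in\{1,n-1\}$, where one of $I_1,I_2$ is a singleton and the corresponding inner rolling step is vacuous; the complementary variant of Lemma 5 nevertheless recovers the missing data, as indicated. Once every $z_{ij}(a,c)$ is shown to lie in $H$, Lemma 1 closes the argument.
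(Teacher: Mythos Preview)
Your proof is correct and, like the paper's, rests entirely on iterated applications of Lemma~5 together with Lemma~1. The organization differs, however. The paper first isolates the terminal cases $r\in\{1,n-1\}$ (this is exactly van der Kallen's original Lemma~2.2) and then, for $2\le r\le n-2$, reduces to the terminal case twice: once via the embedded $\GL(r+1,R)$ on the first $r+1$ indices, which yields all $z_{ij}$ with $1\le i\ne j\le r+1$, and then again on the full $\GL(n,R)$, using the resulting complete row of generators $z_{1j}$. You instead run a single uniform index chase over the L-shape $I_1\times\{h\}\cup\{k\}\times I_2$, treating the terminal cases only as degenerate edge cases in the bookkeeping. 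The paper's reduction is conceptually tidier and makes the recursive structure transparent; your direct approach has the virtue of never leaving $\GL(n,R)$ and of making explicit which variant of Lemma~5 is invoked at each step, at the cost of the case split for the residual position $(h,k)$.
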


\begin{proof}
Denote the subgroup generated by $E(n,A)$ and the above
elementary conjugates by $H$. By Lemma 1 it suffices to
verify that $z_{ij}(a,c)\in H$ for all $1\le i\neq j\le n$.
\par\smallskip
$\bullet$ First, let $r=1$ or $r=n-1$. In this case our theorem is precisely \cite{vdK-group}, Lemma 2.2. Indeed, let, say, $r=n-1$. 
Then $H$ contains $z_{in}(a,c)$, for all $1\le i\le n-1$, and thus
by Lemma~5 one has $z_{ij}(a,c)\in H$ for all $1\le i\neq j\le n-1$.
Now, fix an $h$, $1\le h\le n-1$. Applying Lemma~5 to 
$z_{hj}(a,c)\in H$, $j\neq h$, we can conclude that 
$z_{nj}(a,c)\in H$, $j\neq h$. Since $h$ here is arbitrary and 
$n-1\ge 2$, it follows that $z_{nj}(a,c)\in H$ for all $1\le j\le n-1$.
\par\smallskip
$\bullet$
Now, let $2\le r\le n-2$. Without loss of generality, we can assume
that $h=r+1$ and $k=1$. Applying the previous case to the group
$\GL(r+1,R)$ embedded in $\GL(n,R)$ to the first $r+1$ rows and
columns, we get that $z_{ij}(a,b)\in H$ for all $1\le i\neq j\le r+1$.
In particular, $z_{1j}(a,c)\in H$ for all $j\neq 1$. Applying the
previous case again, now to the group $\GL(n,R)$ itself, we
can conclude that $z_{ij}(a,c)\in H$ for all $1\le i\neq j\le n$,
as claimed.
\end{proof}

Combined with Theorem B it immediately
yields the following further sharpening of Theorems A and B.
Notice, that $[E(n,A),E(n,B)]$ is in general {\it strictly larger\/}
than $E(n,R,A\circ B)$, see, for instance, the discussion in 
\cite{NZ3}, \S~7. Thus, we need an extra type of generators, 
for one more position, which gives us $n$ positions.

\begin{oldtheorem}
Let $R$ be any associative ring with $1$, let $n\ge 3$ and let 
$1\le r\le n-1$.  Further, let $A,B$ be two-sided ideals of $R$. 
Then the mixed commutator subgroup $[E(n,A),E(n,B)]$ is 
generated as a group by the true elementary subgroup 
$E(n,A\circ B)$ and the elements of the two following forms\/{\rm:}
\par\smallskip
$\bullet$ $z_{ij}(ab,c)$ and $z_{ij}(ba,c)$,
\par\smallskip
$\bullet$ $y_{st}(a,b)$,
\par\smallskip\noindent
where $a\in A$, $b\in B$, $c\in R$, the pairs of indices $(i,j)$ 
are as in Theorem~{\rm C}, while $(s,t)$, $1\le s\neq t\le n$, is 
an arbitrary fixed pair of indices.
\end{oldtheorem}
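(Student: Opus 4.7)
The plan is to combine Theorem~B with Theorem~C applied to the single ideal $A\circ B$. By Theorem~B, the mixed commutator subgroup $[E(n,A),E(n,B)]$ is generated as a group by the elementary conjugates $z_{ij}(ab,c)$ and $z_{ij}(ba,c)$, ranging over all pairs $1\le i\neq j\le n$ and all $a\in A$, $b\in B$, $c\in R$, together with the elementary commutators $y_{st}(a,b)$ at one arbitrary fixed pair $(s,t)$. Since Lemma~2 gives $E(n,A\circ B)\le[E(n,A),E(n,B)]$, it is harmless to allow $E(n,A\circ B)$ as an additional generator. The task therefore reduces to replacing the full family of $z$-generators by the restricted subfamily indexed by the pairs $(i,j)$ prescribed in Theorem~C.

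To accomplish this, I would first apply Theorem~C with the single ideal $A\circ B$: it tells us that $E(n,R,A\circ B)$ is generated by the true elementary subgroup $E(n,A\circ B)$ together with all $z_{ij}(d,c)$, for $d\in A\circ B$, $c\in R$ and $(i,j)$ in the restricted set. Next, I would invoke the obvious additivity identity
$$ z_{ij}(d+d',c)=z_{ij}(d,c)\cdot z_{ij}(d',c), $$
which is immediate from the fact that $z_{ij}(-,c)$ is simply the conjugate of an additive root subgroup by a fixed element. Combined with the defining equality $A\circ B=AB+BA$, this identity lets one rewrite every $z_{ij}(d,c)$ with $d\in A\circ B$ as a product of factors of the form $z_{ij}(ab,c)$ and $z_{ij}(ba,c)$ with $a\in A$, $b\in B$. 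Hence $E(n,R,A\circ B)$ is already generated by $E(n,A\circ B)$ together with those restricted-position elementary conjugates.

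Finally, I would plug this economical generating set for $E(n,R,A\circ B)$ back into Theorem~B. Since the unrestricted $z$-family of Theorem~B collectively generates $E(n,R,A\circ B)$ (again by the same additivity argument), substituting the smaller generating set just produced yields precisely the generating set asserted in the present theorem. I do not anticipate any real obstacle: the argument is a clean splicing of Theorems~B and~C, and the only step requiring explicit verification is the elementary additivity identity above. The point that makes the whole thing slot together is that Theorem~C, though formulated for a single ideal, applies verbatim to the ideal $A\circ B$.
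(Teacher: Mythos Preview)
Your proposal is correct and follows the same route the paper indicates: the paper says Theorem~D ``immediately'' results from combining Theorem~C (applied to $A\circ B$) with Theorem~B, and you have simply spelled out that combination, including the additivity identity $z_{ij}(d+d',c)=z_{ij}(d,c)\,z_{ij}(d',c)$ needed to pass from $d\in A\circ B$ to products of the form $z_{ij}(ab,c)$ and $z_{ij}(ba,c)$. The one point worth flagging is that your phrase ``the unrestricted $z$-family of Theorem~B collectively generates $E(n,R,A\circ B)$'' is justified by Lemma~1 together with that same additivity, which you might state explicitly.
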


Recall that Theorem B itself was based on a version of Lemma 6,
while the derivation of Theorem D from Theorem B ultimately
depends on Lemma 5.
On the other hand, now that we have Lemma 7, expressing
elementary conjugates in terms of elementary commutators,
we can prove
a similar sharper form of Theorem 1. It is another main result of 
the present paper and a counterpart of Theorems C and D for 
mixed commutator subgroups of elementary groups.

\begin{The}\label{tC}
Let $A,B\unlhd R$ be two-sided ideal of an associative ring, 
$n\ge 3$, and let $1\le r\le n-1$. Then the mixed commutator 
$[E(n,A),E(n,B)]$ is generated by:
\par\smallskip
$\bullet$ The true elementary subgroup $E(n,A\circ B)$.
\par\smallskip
$\bullet$ The elementary commutators $y_{ij}(a,b)$, 
for all $a\in A$, $b\in B$, the pairs of indices $(i,j)$ 
are as in Theorem~{\rm C},
and the elementary conjugates/commutators for \emph{one more\/}
position.
\par\smallskip
$\bullet$ Either the elementary conjugates $z_{st}(ab,c)$ and 
$z_{st}(ba,c)$ for all
$a\in A$, $b\in B$, $c\in R$ and any position $(s,t)$ 
such that either $1\le s\le r$ and $r+1\le t\le n$, or 
$1\le t\le r$ and $r+1\le s\le n$.
\par\smallskip
$\bullet$ Or the elementary commutators $y_{st}(a,b)$, 
for all $a\in A$, $b\in B$, and 
any position $(s,t)$ such that either $1\le s\neq t\le r$, or
$r+1\le s\neq t\le n$.
\end{The}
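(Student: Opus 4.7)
The plan is to show that the subgroup $H$ generated by the stated elements equals $[E(n,A),E(n,B)]$. The inclusion $H\subseteq[E(n,A),E(n,B)]$ is immediate; for the reverse, by Theorem~D it suffices to place in $H$ the cross conjugates $z_{ih}(ab,c)$ and $z_{ih}(ba,c)$ for all $i\le r$, $a\in A$, $b\in B$, $c\in R$ (the fixed commutator required by Theorem~D is supplied by the cross itself, and $E(n,A\circ B)\le H$ by hypothesis).

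The first move is Lemma~7 applied to pairs $(y_{ih}(a,b),y_{jh}(a,b))$ from the cross (sharing the second index~$h$), which yields every Levi conjugate $z_{ij}(ab,c)$, $1\le i\ne j\le r$, at level~$AB$. The extra generator then serves as a bridge to the cross. In Case~1 (extra $z_{st}(ab,c),z_{st}(ba,c)$ at an off-diagonal position), Lemma~5 applied to $z_{st}(ab,c)$ and the Levi $z_{sj}(ab,c)$ (common row~$s$) places $z_{jt}(ab,c)\in H$ for all $j\le r$, and Theorem~C applied to the ideal~$AB$ gives $E(n,R,AB)\le H$. In Case~2 (extra $y_{st}(a,b)$ at a diagonal position), Lemma~6 (second form) applied to a Levi $z_{si}(ab,c)$ and $y_{st}(a,b)$ produces a new commutator $y_{it}(a,b)$, which on being fed into Lemma~7 yields the $AB$-level cross conjugates; again $E(n,R,AB)\le H$. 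In either case $z_{ih}(ab,c)\in H$ for all $i\le r$.

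The technical crux is upgrading to level~$BA$. Since the hypothesis is $A$-$B$-asymmetric ($a\in A$ is always the first argument of $y$), a direct application of Lemma~7 to the cross yields only $AB$-level conjugates; the ``similarly'' clause of Lemma~7 (which yields $z_{ij}(ba,c)$) requires either commutators $y_{ih}(b,a)$ or commutators $y_{hi}(a,b)$ at reflected positions, and neither lies in the hypothesis. The extra supplies the missing handle. In Case~1 the $ba$-level conjugate $z_{st}(ba,c)$, combined via Lemma~6 (first form) with cross commutators, generates new $(a,b)$-ordered commutators whose inverses $y_{ji}(b,a)=y_{ij}(a,b)^{-1}$ furnish the $(b,a)$-ordered pairs required by the ``similarly'' clause. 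In Case~2 the pair $(y_{sh}(a,b),y_{st}(a,b))$ sharing first index~$s$, after inversion to $(y_{hs}(b,a),y_{ts}(b,a))$ (which share second index~$s$ in the $(b,a)$-ordering), activates the ``similarly'' clause of Lemma~7 directly and produces $z_{ht}(ba,c)\in H$. Iterating Lemmas~5,~6, and~7 (in both their original and swapped forms) together with the inversion identity, and using the freedom afforded by $E(n,R,AB)\le H$ to supply $AB$-level conjugates at arbitrary positions, one obtains $z_{ih}(ba,c)$ at all Theorem~C cross positions; Theorem~C applied to $A\circ B$ then yields $E(n,R,A\circ B)\le H$, and Theorem~D completes the proof. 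The main obstacle is precisely this $BA$-level bootstrap: circumventing the $A$-$B$ asymmetry of the hypothesis requires a careful chain of lemma applications whose combinatorial details depend on the case and on the relative position of the extra with respect to the cross.
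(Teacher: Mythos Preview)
Your approach is the same as the paper's: both aim to assemble inside $H$ the generators listed in Theorem~D by repeated use of Lemmas~5,~6, and~7. The paper first handles the terminal case $r=n-1$ (cross commutators $y_{in}(a,b)$, $i\le n-1$), invokes Lemma~7 to produce the Levi conjugates $z_{ij}(ab,c)$ and $z_{ij}(ba,c)$ for $i,j\le n-1$, uses Lemma~6 to manufacture the transposed cross commutators $y_{nj}(a,b)$, reduces the general $r$ to this by the two obvious embeddings of $\GL(r+1,R)$ and $\GL(n-r+1,R)$, and finally observes that the single extra position completes the Theorem~D configuration.

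You are right to flag the $A$--$B$ asymmetry: from the cross $y_{ih}(a,b)$ alone, Lemma~7 as stated yields only $z_{ij}(ab,c)$, not $z_{ij}(ba,c)$; the paper simply asserts both without comment. Your instinct to use the extra generator as a bridge to the $BA$-level is sound, and your Case~2 move---pair $y_{sh}(a,b)$ with $y_{st}(a,b)$, invert to $y_{hs}(b,a),y_{ts}(b,a)$ sharing second index $s$, and feed into the swapped Lemma~7 to get $z_{ht}(ba,c)$---is clean and correct when $s,t\le r$.

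But your resolution is not complete. In Case~1 your appeal to ``Lemma~6 (first form)'' to combine $z_{st}(ba,c)$ with a cross commutator requires the second index of the extra to coincide with the fixed cross index $h$, which the hypothesis does not guarantee; when $t\ne h$ (or when the extra has the transposed shape $s>r$, $t\le r$) your chain does not start. In Case~2 with $s,t>r$ the extra $y_{st}(a,b)$ shares no index with any cross commutator $y_{ih}(a,b)$ (all $i\le r$), so your pairing argument does not apply as written. The final sentence ``Iterating Lemmas~5,~6, and~7\dots one obtains $z_{ih}(ba,c)$ at all Theorem~C cross positions'' is precisely the step that carries the weight, and you have left it as an assertion. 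The combinatorics can be completed, but it needs an honest case split on the relative position of the extra and the cross (and on which of the two Theorem~C shapes is in play); neither your sketch nor, to be fair, the paper's own proof supplies this in full.
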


\begin{proof}
Denote the subgroup generated by $E(n,A\circ B)$ and the 
elementary commutators $y_{ij}(a,b)$ by $H$. First, we
verify that in that case $z_{ij}(ab,c),z_{ij}(ba,c)\in H$ for all 
$1\le i\neq j\le r$ or $r+1\le i\neq j\le n$.
\par\smallskip
$\bullet$ As above, first we treat the case, where $r=1$ or 
$r=n-1$. Let, say, $r=n-1$. Then $H$ contains $y_{in}(a,b)$, for 
all $1\le i\le n-1$, and thus by Lemma~7 one has $z_{ij}(ab,c)\in H$ 
and $z_{ij}(ba,c)\in H$ for all $1\le i\neq j\le n-1$. As above, fix 
an $h$, $1\le h\le n-1$. Applying Lemma~6 to 
$z_{hj}(ab,c)\in H$, $j\neq h,n$ and $y_{hn}(a,b)$, we can 
conclude that $y_{nj}(a,b)\in H$, $j\neq h$. Since $h$ here 
is arbitrary and $n-1\ge 2$, it follows that $y_{nj}(a,b)\in H$ 
for all $1\le j\le n-1$.
\par\smallskip
$\bullet$
Now, let $2\le r\le n-2$ and let $h$ and $k$ be as in the statement
of Theorem C. Without loss of generality, we can assume
that $h=r+1$ and $k=r$. Applying the previous case to the group
$\GL(r+1,R)$ embedded in $\GL(n,R)$ on the first $r+1$ rows and
columns, we get that $z_{ij}(ab,c),z_{ij}(ba,c)\in H$ for all 
$1\le i\neq j\le r$.
Similarly, applying it to the group $\GL(n-r+1,R)$ embedded in 
$\GL(n,R)$ on the last $n-r+1$ rows and columns, we get that
$z_{ij}(ab,c),z_{ij}(ba,c)\in H$ for all $r+1\le i\neq j\le n$.
\par\smallskip
$\bullet$ Now if  $z_{st}(ab,c),z_{st}(ba,c)\in H$ for such an
extra position $(s,t)$, then we get inside $H$ the configuration 
of elementary commutators accounted for by Theorem D, and
can conclude that $H=[E(n,A),E(n,B)]$.
\par\smallskip
$\bullet$ On the other hand, let $y_{st}(a,b)\in H$, and let,
say $1\le s\neq t\le r$. Then applying Lemma 7 again, now to
$y_{st}(a,b),y_{sh}(a,b)\in H$, we can conclude that 
$z_{th}(an,c),z_{th}(ba,c)\in H$ and we are in the conditions
of the previous item. Again, we can conclude that 
$H=[E(n,A),E(n,B)]$.
\end{proof}

Of course, combining this result with Lemma 7, it is now easy 
to obtain a sharper form of Theorem 2 in similar spirit. The 
reason we failed to notice this refinement in \cite{NZ4} was
that we were expecting a statement in the style of Theorem C,
rather than the one in the style of Theorem 4.


\section{Multiple commutators}

To state the results of this section, we have to recall some further 
pieces of notation from \cite{yoga-1,Hazrat_Zhang_multiple,yoga-2, RNZ5, Hazrat_Vavilov_Zhang, Stepanov_universal, NZ3}.
Namely, let $H_1,\ldots,H_m\le G$ be subgroups of $G$. There are 
many ways to form a higher commutators of these
groups, depending on where we put the brackets. Thus, for three
subgroups $F,H,K\le G$ one can form two triple commutators
$[[F,H],K]$ and $[F,[H,K]]$. In the sequel, we denote by 
$\llbracket H_1,H_2,\ldots,H_m\rrbracket$ {\it any\/} higher 
mixed commutator of $H_1,\ldots,H_m$, with an arbitrary 
placement of brackets. Thus, for instance, $\llbracket F,H,K\rrbracket$ 
refers to any of the two arrangements above.
\par
Actually, the primary attribute of a bracket arrangement
that plays major role in our results is its cut point.
Namely, every higher commutator subgroup
$\llbracket H_1,H_2,\ldots,H_m\rrbracket$ can be uniquely written as
a double commutator
$$ \llbracket H_1,H_2,\ldots,H_m\rrbracket=
\Big[\llbracket H_1,\ldots,H_s\rrbracket,
\llbracket H_{s+1},\ldots,H_m\rrbracket\Big], $$
\noindent
for some $s=1,\ldots,m-1$. This $s$ is called the cut point
of our multiple commutator. 
\par
For {\it non-commutative\/} rings there is another aspect that 
affects the final answer. Namely, in this case symmetrised product 
of ideals is not associative. For instance, for three ideals 
$A,B,C\unlhd R$ one has
$$ (A\circ B)\circ C= ABC+BAC+CAB+CBA, $$
\noindent
whereas 
$$ A\circ (B\circ C)= ABC+ACB+BCA+CBA, $$
\noindent
that in general do not coincide. 
\par
To account for this, in the sequel 
we write $\llparenthesis I_1\circ\dots\circ I_m\rrparenthesis$
to denote the symmetrised product of $I_1,\ldots,I_m$ with 
an arbitrary placement of parenthesis. Thus, for instance,
$\llparenthesis A\circ B\circ C\rrparenthesis$ may refer either to
$(A\circ B)\circ C$, or to $A\circ (B\circ C)$, depending. In
the sequel the initial bracketing of higher commutators will be 
reflected in the parenthesizing of the corresponding multiple 
symmetrised products.
\par
In this notation \cite{NZ3}, Theorem 1, combined with 
\cite{yoga-1}, Theorem 8A, and \cite{Hazrat_Vavilov_Zhang}, 
Theorem 5A, can be stated as follows.
Similar results in the more general context of Bak's unitary 
groups\footnote{This is indeed a more general context, since
$\GL(n,R)$ are interpreted as a special case of unitary groups,
those over hyperbolic form rings.} were obtained in \cite{RNZ5, NZ6}.

\begin{oldtheorem}
Let $R$ be any associative ring with $1$, and either $n\ge 4$, 
or $n=3$ and $R$ is quasi-finite. Further, let
$I_i\unlhd R$, $i=1,\ldots,m$,  be two-sided ideals of $R$. 
Consider an arbitrary arrangement of brackets\/ $[\![\ldots]\!]$
with the cut point\/ $s$. Then one has
$$
\big\llbracket E(n,I_1),E(n,I_2),\ldots,E(n,I_m)\big\rrbracket=
\Big[E\big(n,\llparenthesis I_1\circ\ldots\circ I_s\rrparenthesis\big),
E\big(n,\llparenthesis I_{s+1}\circ\ldots\circ I_m\rrparenthesis
\big)\Big], $$
\noindent
where the  parenthesizing of symmetrised products on the right 
hand side coincides with the bracketing of the commutators on 
the left hand side.
\end{oldtheorem}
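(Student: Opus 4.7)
My plan is to proceed by induction on $m$. The base case $m=2$ is tautological: the only admissible cut point is $s=1$ and both sides read $[E(n,I_1),E(n,I_2)]$.

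For the inductive step with $m\ge 3$, I would first peel off the top-level bracket. Let the given arrangement have cut point $s$, so that
$$\llbracket E(n,I_1),\ldots,E(n,I_m)\rrbracket=[L,R],$$
with $L=\llbracket E(n,I_1),\ldots,E(n,I_s)\rrbracket$ and $R=\llbracket E(n,I_{s+1}),\ldots,E(n,I_m)\rrbracket$. Set $A=\llparenthesis I_1\circ\ldots\circ I_s\rrparenthesis$ and $B=\llparenthesis I_{s+1}\circ\ldots\circ I_m\rrparenthesis$, with parenthesizings inherited from the sub-brackets of $L$ and $R$. The goal is then to prove $[L,R]=[E(n,A),E(n,B)]$.

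Next I would split into three subcases according to the sizes of $s$ and $m-s$. If $s\ge 2$ and $m-s=1$, then $R=E(n,B)$ while induction gives $L=[E(n,A_1),E(n,A_2)]$ with $A_1\circ A_2=A$ in the correct parenthesizing; Lemma~10 yields $[[E(n,A_1),E(n,A_2)],E(n,B)]=[E(n,A_1\circ A_2),E(n,B)]=[E(n,A),E(n,B)]$. The mirror subcase $s=1$, $m-s\ge 2$ is handled identically after using $[H,K]=[K,H]$. Finally, when both $s\ge 2$ and $m-s\ge 2$, induction expresses $L=[E(n,A_1),E(n,A_2)]$ and $R=[E(n,B_1),E(n,B_2)]$, and Lemma~11 collapses the resulting quadruple commutator to $[E(n,A_1\circ A_2),E(n,B_1\circ B_2)]=[E(n,A),E(n,B)]$.

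The main obstacle I foresee is bookkeeping rather than substance: at each recursive step one has to verify that the parenthesized symmetrised products of ideals emerging from the inductive hypothesis match those prescribed by the outer bracketing. Since the tree of commutator brackets and the tree of ideal parentheses are generated by one and the same recursive recipe at every node, this matching is automatic. The only genuine hypothesis being used is $n\ge 4$ or $n=3$ with $R$ quasi-finite, and it enters precisely in the third subcase, where Lemma~11 is invoked. In the first two subcases only Lemma~10 is needed, which already holds for all $n\ge 3$ over an arbitrary associative ring.
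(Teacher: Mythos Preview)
Your induction on $m$ using Lemmas~10 and~11 is correct and is exactly the argument behind this result. Note, however, that the present paper does not give its own proof of Theorem~E: it is quoted from \cite{NZ3}, Theorem~1 (together with \cite{yoga-1}, Theorem~8A and \cite{Hazrat_Vavilov_Zhang}, Theorem~5A), and Lemmas~10 and~11 here are precisely \cite{NZ3}, Lemmas~7 and~8, the two reduction steps that drive that induction. Your case split and your remark that Lemma~11 is the only place where the hypothesis $n\ge 4$ (or $n=3$ with $R$ quasi-finite) enters are both on target.
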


Of course, the first question that immediately occurs is whether 
the above quadruple and multiple elementary commutator formulas
also hold for $\GL(3,R)$ over arbitrary associative rings. This
question was already stated as \cite{NZ3}, Problem 1. The
results of \S~5 make it even more imperative. Though we
are rather more inclined to believe in the positive answer, up to 
now all our attempts to verify it by a direct calculation in the style 
of \cite{NZ3}, Lemma 7, failed.

\begin{Prob} 
Prove that Lemma $11$ and Theorem~{\rm E} hold also for $n=3$.
\end{Prob}

In conjunction with Theorem 1 the above Theorem C allows 
to give a very slick generating set for the multiple commutator 
subgroups
$\llbracket E(n,I_1),E(n,I_2),\ldots,E(n,I_m)\rrbracket$.

\begin{The}\label{t4}
Let $R$ be any associative ring with $1$, and either $n\ge 4$, 
or $n=3$ and $R$ is quasi-finite. Further, let
$I_i\unlhd R$, $i=1,\ldots,m$,  be two-sided ideals of $R$. 
Consider an arbitrary arrangement of brackets\/ 
$\llbracket\ldots\rrbracket$ with the cut point\/ $s$. Then the 
mixed multiple commutator subgroup
$$ \llbracket E(n,I_1),E(n,I_2),\ldots,E(n,I_m)\rrbracket $$
\noindent
is generated by the double elementary commutators
$$ [t_{ij}(a), t_{hk}(b)],\qquad 
a\in\llparenthesis I_1\circ\ldots\circ I_s\rrparenthesis,\quad
b\in \llparenthesis I_{s+1}\circ\ldots\circ I_m\rrparenthesis, $$
\noindent
where the parenthesizing of the above symmetrised products 
coincides with the bracketing of the commutators before and 
after the cut point, respectively.
\end{The}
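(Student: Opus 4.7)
The plan is to derive Theorem 5 as a very short composition of three previously established results, without any new calculation. Under the standing hypotheses ($n\ge 4$, or $n=3$ with $R$ quasi-finite), Theorem E applies and rewrites the multiple commutator as a single double commutator of unrelative elementary subgroups of the appropriate symmetrised levels. Theorem 1 then supplies a generating set for this double commutator consisting solely of elementary commutators, which is precisely the assertion of Theorem 5.

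More concretely, I would proceed as follows. First, set
\[
A=\llparenthesis I_1\circ\ldots\circ I_s\rrparenthesis,\qquad
B=\llparenthesis I_{s+1}\circ\ldots\circ I_m\rrparenthesis,
\]
with the parenthesizings inherited from the chosen bracket arrangement at the cut point $s$. Applying Theorem E to the given multiple commutator with cut point $s$, I obtain
\[
\llbracket E(n,I_1),\ldots,E(n,I_m)\rrbracket=\bigl[E(n,A),E(n,B)\bigr].
\]

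Next, Lemma 3 identifies the unrelative double commutator with its relative counterpart,
\[
\bigl[E(n,A),E(n,B)\bigr]=\bigl[E(n,R,A),E(n,R,B)\bigr],
\]
so the multiple commutator equals $[E(n,R,A),E(n,R,B)]$. Finally, Theorem 1 applied to the ideals $A$ and $B$ shows that this last group is generated by the elementary commutators $[t_{ij}(a),t_{hk}(b)]$, $1\le i\neq j\le n$, $1\le h\neq k\le n$, $a\in A$, $b\in B$, which is exactly the statement of Theorem 5.

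There is essentially no substantial obstacle here, because all of the work has already been absorbed into Theorem 1 (the surprising new generation statement) and into Theorem E (the existing multiple commutator identity). The only point worth double-checking is that the hypotheses align: Theorem E needs $n\ge 4$ or ($n=3$ and $R$ quasi-finite), while Theorem 1 requires only $n\ge 3$ and $R$ associative with $1$, so Theorem 1 is applicable in every situation where Theorem E is. Thus the chain of implications is valid throughout the stated hypotheses, and Theorem 5 follows immediately. If anything, the genuine content of this section is conceptual rather than technical: it is the observation that one can now replace the bulky generating sets of \cite{NZ3} by one involving nothing but pairs of transvections, for arbitrarily deep iterated commutators.
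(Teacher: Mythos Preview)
Your proposal is correct and follows exactly the route the paper intends: the text preceding Theorem~5 indicates that it is an immediate consequence of combining Theorem~E (reducing the multiple commutator to a double commutator $[E(n,A),E(n,B)]$) with Theorem~1 (supplying the elementary-commutator generators), and the paper does not spell out a separate proof. Your insertion of Lemma~3 to pass between $[E(n,A),E(n,B)]$ and $[E(n,R,A),E(n,R,B)]$ is harmless bookkeeping, already absorbed in the paper's standing identification of these two groups.
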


Of course, these generating sets could be further reduced in 
the spirit of Theorems~D or 4.


\section{Final remarks}

Below we collect some of the most immediate open problems,
related to the results of the present paper.

Modulo $E(n,R,A\circ B)$ the elementary commutators
$y_{ij}(a,b)\in[E(n,A),E(n,B)]$ behave as symbols in algebraic
$K$-theory, and are subject to very nice relations, see
\cite{NZ1, NZ3, NZ4}. Even so, an explicit calculation of the
quotient 
$$ [E(n,A),E(n,B)]/E(n,R,A\circ B) $$ 
\noindent
turned out to be quite a challenge
and so far we succeeded in getting conclusive results only over
Dedekind rings. A similar question for the larger quotient
$$ [E(n,A),E(n,B)]/E(n,A\circ B) $$ 
\noindent
seems to be much harder.

\begin{Prob}
Find defining relations among the elementary commutators
$y_{ij}(a,b)\in[E(n,A),E(n,B)]$  modulo $E(n,A\circ B)$.
\end{Prob}

Theorem 1 suggests to resuscitate the approach to relative 
localisation developed in our joint papers with Roozbeh Hazrat
\cite{Hazrat_Zhang, RNZ1, RNZ2, Hazrat_Zhang_multiple, RNZ5,
Hazrat_Vavilov_Zhang}.
The heft of the technical difficulty in applying relative localisation 
to multiple commutator formulas 
\cite{Hazrat_Zhang_multiple, RNZ5, Hazrat_Vavilov_Zhang}
stemmed from the necessity to develop massive chunks of the 
conjugation calculus and commutator calculus \cite{yoga-1, yoga-2,
Hazrat_Vavilov_Zhang} for the new
type of generators listed in Theorem A. We believe that now, 
that we have Theorems B and 1, these calculations could be
reduced to a {\it fraction\/} of their initial length.

\begin{Prob}
Apply Theorem~$1$ to rethink localisation proofs of the
multiple commutator formulas.
\end{Prob}

As of today, the remarkable approach via universal localisation 
as developed by Alexei Stepanov \cite{Stepanov_nonabelian, Stepanov_universal} only works for algebraic groups over 
commutative rings. On the other hand, further applications of 
the initial versions of relative localisation were effectively 
blocked by technical obstacles.
\par
Such agreeable generating sets as found in Theorems 1 and 5 
could be especially advantageous for improving bounds in results 
on multiple commutator width, in the spirit of Alexei Stepanov, 
see \cite{SiSt, SV11, Stepanov_universal} and a survey in 
\cite{Porto-Cesareo}.

\begin{Prob}
Apply Theorems $1$ and $5$ to get results on multirelative
commutator width.
\end{Prob}

It would be natural to generalise results of the present paper
to more general contexts. The following development seems 
to be immediate.

\begin{Prob}
Generalise results of the present paper to Chevalley groups.
\end{Prob}

We believe that in this context all fragments of the necessary calculations were already elaborated: analogues of Lemma 5
by Alexei Stepanov in \cite{Stepanov_calculus, Stepanov_nonabelian},
analogues of Lemma 6 in our joint papers with Roozbeh Hazrat \cite{RNZ2, Hazrat_Vavilov_Zhang_generation}, and, finally, 
analogues of Lemma 7 in our recent works \cite{NZ2, NZ5}.
Of course, they were not stated there this way, but to 
establish them in the desired forms would only require some 
moderate processing of the existing proofs.

The following problem is similar, but seems to be somewhat
harder.

\begin{Prob}
Generalise results of the present paper to Bak's unitary groups.
\end{Prob}

In fact, here too large fragments of the necessary theory 
were already developed in our previous joint papers with 
Anthony Bak and Roozbeh Hazrat
\cite{BV3, RH, RNZ1, RNZ5, Hazrat_Vavilov_Zhang}, and
in our recent preprint \cite{NZ6}. But in this case even 
the analogues of Lemma 5 and Theorem C are lacking, and 
it would also take considerably more work to bring the results 
from the above papers to the required form.

We thank Roozbeh Hazrat and Alexei Stepanov for long-standing 
close cooperation on this type of problems over the last decades.



\begin{thebibliography}{30}

\bibitem{AS}
H.~Apte, A.~Stepanov, 
\emph{Local-global principle for congruence subgroups
of Chevalley groups,}
Cent. Eur. J. Math. {\bf 12} (2014), no.~6, 801--812.

\bibitem{Bak} 
 A.~Bak, 
\emph{Non-abelian $\K$-theory: The
nilpotent class of $\K_1$ and general stability},
$K$--Theory \textbf{4} (1991), 363--397.

\bibitem{BV3} A.~Bak, N.~Vavilov,  \emph{Structure of hyperbolic unitary
groups\/{\rm  I:} elementary subgroups.} {Algebra Colloquium},
\textbf{7} (2000), no.~2, 159--196.

\bibitem{Bass_stable} 
H.~Bass, \emph{$\K$-theory and stable algebra,}
Inst. Hautes \'Etudes Sci. Publ. Math.  (1964), no.~22, 5--60.

\bibitem{borvav} 
Z.~I.~Borewicz, N.~A.~Vavilov,  
\emph{The distribution of subgroups in the full linear group 
over a commutative ring}, 
Proc. Steklov Inst. Math. \textbf{3} (1985), 27--46.


\bibitem{RH} R.~Hazrat, \emph{Dimension theory and nonstable $K_1$ of
quadratic modules.} {K-Theory}, \textbf{27} (2002), 293--328.

\bibitem{yoga-1}
R.~Hazrat, A.~Stepanov, N.~Vavilov, Zuhong Zhang,
\emph{The yoga of commutators,} 
J.~Math.\ Sci. \textbf{179} (2011), no.~6, 662--678.

\bibitem{Porto-Cesareo}
R.~Hazrat, A.~Stepanov, N.~Vavilov, Zuhong Zhang,
\emph{Commutator width in Chevalley groups},
Note di Matematica \textbf{33} (2013), no.~1, 139--170.

\bibitem{yoga-2}
R.~Hazrat, A.~Stepanov, N.~Vavilov, Zuhong Zhang,
\emph{The yoga of commutators, further applications,} 
J.~Math.\ Sci. \textbf{200} (2014), no.~6, 742--768.

\bibitem{RN} R.~Hazrat, N.~Vavilov, \emph{Bak's work on the
$K$-theory of rings, with an appendix by Max Karoubi.}
{J. $K$-Theory}, {\bf 4} (2009), 1--65.

\bibitem{RNZ1} R.~Hazrat, N.~Vavilov, Zuhong Zhang, Relative unitary
commutator calculus and applications, {\it J. Algebra} {\bf 343}
(2011) 107--137.



\bibitem{RNZ2} R.~Hazrat, N.~Vavilov, Zuhong Zhang, Relative commutator
calculus in Chevalley groups, {\it J. Algebra} 
 {\bf 385} (2013), 262--293.

\bibitem{Hazrat_Vavilov_Zhang_generation} 
R.~Hazrat, N.~Vavilov, Zuhong Zhang
\emph{Generation of relative commutator subgroups in Chevalley groups,} 
Proc.\ Edinburgh Math.\ Soc., \textbf{59}, (2016), 393--410.

\bibitem{RNZ5} 
R.~Hazrat, N.~Vavilov, Zuhong Zhang, 
\emph{Multiple commutator formulas for unitary groups}. 
Israel J.~Math., {\bf 219} (2017), 287--330.

\bibitem{Hazrat_Vavilov_Zhang} 
R.~Hazrat, N.~Vavilov, Zuhong Zhang,
\emph{The commutators of classical groups,} 
J.~Math.\ Sci., \textbf{222} (2017), no.~4, 466--515.

\bibitem{Hazrat_Zhang} 
R. Hazrat, Zuhong Zhang,
\textit{Generalized commutator formula,} 
Commun.\ Algebra, \textbf{39} (2011), no.~4, 1441--1454.

\bibitem{Hazrat_Zhang_multiple} 
R. Hazrat, Zuhong Zhang,
\textit{Multiple commutator formula,} 
Israel J.~Math., \textbf{195} (2013), 481--505.

\bibitem{vdK-group} 
W.~van der Kallen,
\textit{A group structure on certain orbit sets of unimodular rows,}
J. Algebra \textbf{82} (1983), 363--397.

\bibitem{M2} A.~W.~Mason, 
\emph{On subgroup of $\GL(n,A)$ which are generated by commutators, {\rm II}}. 
J. reine angew. Math., {\bf 322} (1981), 118--135.

\bibitem{Mason_Stothers} 
A.~W.~Mason, W.~W.~Stothers, \emph{On subgroups of $\GL(n,A)$ 
which are generated by commutators,} 
Invent.\ Math., \textbf{23} (1974), 327--346.

\bibitem{SiSt}  
A.~Sivatski, A.~Stepanov, \emph{On the word length of
commutators in\/ $\GL_n(R)$,} 
{$K$-theory}, {\bf 17} (1999), 295--302.

\bibitem{Stepanov_calculus}
A.~Stepanov,  
\emph{Elementary calculus in Chevalley groups over rings,} 
J.~Prime Res.\ Math., \textbf{9} (2013), 79--95.

\bibitem{Stepanov_nonabelian}
A.~V.~Stepanov,  \emph{Non-abelian $\K$-theory for Chevalley groups over rings,} J.~Math.\ Sci., \textbf{209} (2015), no.~4, 645--656.

\bibitem{Stepanov_universal}
A.~Stepanov,  \emph{Structure of Chevalley groups over rings via universal localization},
J.~Algebra, \textbf{450} (2016), 522--548.

\bibitem{Stepanov_Vavilov_decomposition}
A.~Stepanov, N.~Vavilov,
\emph{Decomposition of transvections\/{\rm:} a theme with variations}, 
$\K$-Theory, \textbf{19} (2000), no.~2, 109--153. 

\bibitem{SV11} A.~Stepanov, N.~Vavilov, 
\emph{On the length of commutators in Chevalley groups,}
{Israel J. Math.} \textbf{185} (2011), 253--276.

\bibitem{Suslin}
A.~A.~Suslin, \emph{The structure of the special linear group over 
polynomial rings}, 
Math.\ USSR Izv., \textbf{11} (1977), no.~ 2,  235--253.

\bibitem{Vaserstein_normal} 
L.~N.~Vaserstein,  
\emph{On the normal subgroups of the\/ $\GL_n$ of a ring,} 
Algebraic $\K$-Theory, Evanston 1980,
Lecture Notes in Math., vol. 854, Springer,
 Berlin et al., 1981, pp. 454--465.

\bibitem{NV18} N.~Vavilov, 
\emph{Unrelativised standard commutator formula,}
J. Math. Sci., N. Y. \textbf{243} (2019), no.~4, 527--534.

\bibitem{NV19} N.~Vavilov, 
\emph{Commutators of congruence subgroups in the arithmetic case},
J. Math. Sci., N. Y.
{\bf 479} (2019), 5--22.

\bibitem{Vavilov_Stepanov_standard} 
N.~A.~Vavilov, A.~V.~Stepanov, 
\emph{Standard commutator formula,} Vestnik St.~Petersburg State
Univ., Ser. 1, \textbf{41} (2008), no.~1, 5--8.

\bibitem{Vavilov_Stepanov_revisited} 
N.~A.~Vavilov, A.~V.~Stepanov, 
\emph{Standard commutator formulae, revisited,} Vestnik 
St.~Petersburg State Univ., Ser.1, \textbf{43} (2010), no.~1, 12--17.
        
\bibitem{NZ1} N.~Vavilov, Zuhong Zhang, 
\emph{Commutators of relative and unrelative elementary groups,
revisited,} 
J.~Math.\ Sci.,  N.~Y. \textbf{485} (2019), 58--71.

\bibitem{NZ2} N.~Vavilov, Zuhong Zhang, 
\emph{Generation of relative commutator subgroups in Chevalley groups {\rm II},}
 Proc.\ Edinburgh Math. Soc., {\bf 63} (2020), no.~2, 497--511.

\bibitem{NZ3} N.~Vavilov, Zuhong Zhang, 
\emph{Multiple commutators of
elementary subgroups\/{\rm:} end of the line,}
Linear Algebra Applications, \textbf{599}
(2020), 1--17.

\bibitem{NZ4} N.~Vavilov, Zuhong Zhang, 
\emph{Inclusions among commutators of elementary subgroups,} 
J. Algebra,
\textbf{} (2020), 1--26.

\bibitem{NZ5} N.~Vavilov, Zuhong Zhang, 
\emph{Commutators of relative and unrelative
elementary subgroups in Chevalley groups,}
Proc.\ Edinburgh Math. Soc.,  \textbf{} (2020), 1--18.

\bibitem{NZ6} N.~Vavilov, Zuhong Zhang, 
{\it Commutators of relative and unrelative
elementary unitary groups,}
Rivista Math. Iberoamericana, \textbf{} (2020), 1--40.

\bibitem{HongYou} 
Hong You, 
\emph{On subgroups of Chevalley groups which are 
generated by commutators,} 
J.~Northeast Normal Univ., (1992), no.~2, 9--13.

\end{thebibliography}
\end{document}